\newtheorem*{Maintheorem*}{Main Theorem}
\newtheorem*{theorem*}{Theorem}
\newtheorem{theorem}{Theorem}
\newtheorem{definition}{Definition}
\newtheorem{lemma}{Lemma}
\newtheorem{remark}{Remark}
\newtheorem{proposition}{Proposition}
\newtheorem{corollary}{Corollary}
\newcommand{\A}{\mathcal{A}}
\newcommand{\E}{\mathcal{E}}
\newcommand{\Oh}{\mathcal{O}}
\newcommand{\C}{\mathbb{C}}
\newcommand{\la}{\alpha}
\newcommand{\lb}{\overline{\beta}}
\newcommand{\ld}{\overline{\delta}}
\newcommand{\lc}{\gamma}
\newcommand{\lC}{\Gamma}
\newcommand{\lt}{\tau}
\newcommand{\ls}{\sigma}
\newcommand{\we}{\wedge}
\newcommand{\ovl}{\overline}
\newcommand{\dbar}{\bar \partial}
\newcommand{\dl}{ \partial}
\newcommand\<{\langle} 
\renewcommand\>{\rangle}
\newcommand{\X}{\mathcal{X}}
\newcommand{\jbar}{\ovl{\jmath}}
\newcommand{\lbar}{\ovl{l}}
\newcommand{\vbar}{\ovl{v}}
\newcommand{\Abar}{\ovl{A}}
\newcommand{\Bbar}{\ovl{B}}
\newcommand{\Cbar}{\ovl{C}}
\newcommand{\cbar}{\ovl{\gamma}}
\newcommand{\Dbar}{\ovl{D}}
\newcommand{\sbar}{\ovl{s}}
\newcommand{\id}{\operatorname{id}}
\newcommand{\laplace}{\Box_\partial}    
\newcommand{\laplacedbar}{\Box_{\dbar}}
\begin{document}

\title{Curvature of higher direct images}

\author{Philipp Naumann}
\address{Fachbereich Mathematik und Informatik,
Philipps-Universit\"at Marburg, Lahnberge, Hans-Meerwein-Straße, D-35032
Marburg,Germany}
\email{naumann@mathematik.uni-marburg.de}

\thanks{}

\subjclass[2000]{32L10, 32G05, 14Dxx}

\keywords{Curvature of higher direct image sheaves, Deformations of complex structures, Families, Fibrations}

\date{}

\begin{abstract}
Given a holomorphic family $f:\mathcal{X} \to S$ of compact complex manifolds and a relatively ample line bundle $L\to \X$, the higher direct images $R^{n-p}f_*\Omega^p_{\X/S}(L)$ carry a natural hermitian metric. We give an explicit formula for the curvature tensor of these direct images. This generalizes a result of Schumacher \cite{Sch12}, where he computed the curvature of 
$R^{n-p}f_*\Omega^p_{\X/S}(K_{\X/S}^{\otimes m})$ for a family of canonically polarized manifolds. For $p=n$,  it coincides with a  formula of Berndtsson obtained in \cite{Be11}. Thus, when $L$ is globally ample, we reprove his result \cite{Be09} on the Nakano positivity of $f_*(K_{\X/F}\otimes L)$. 
\end{abstract}

\maketitle

\section{Introduction}
For a positive line bundle $L$ on a compact complex manifolds $X$ of dimension $n$, the cohomology groups $H^{n-p}(X,\Omega^p_X(L))$ are critical with respect to Kodaira-Nakano vanishing. More generally, we consider the higher direct image sheaves $R^{n-p}f_*\Omega^p_{\X/S}(L)$ for a proper holomorphic submersion $f: \X \to S$ of complex manifolds and a line bundle $L \to \X$ which is positive along the fibers $X_s=f^{-1}(s)$. The understanding of this situation has applications to moduli problems. In his article \cite{Sch12} Schumacher studies the case $L=K_{\X/S}$ where the fiberwise K\"ahler-Einstein metrics are used to construct a hermitian metric on the relative canonical bundle which turned out to be semi-positive on the total space. A compact curvature formula is given in this case. At first glance, the method of computation seems to be restricted to the K\"ahler-Einstein situation. In the general case, there is the result \cite{Be09} of Berndtsson about the Nakano (semi-) positivity of the direct image $f_*(K_{\X/S}\otimes L)$ in the case where $L$ is (semi-) positive. His proof relies on a careful choice of representatives of sections and the usage of his ``magic formula'' \footnote{This terminology was used in \cite{LY14}}. Relying on this method of computation, Mourougane and Takayama studied in \cite{MT08} the higher direct images $R^qf_*\Omega^n_{\X/S}(E)$ for a Nakano (semi-) positive vector bundle $E$ over $\X$. The proof given there relies on an embedding of the higher direct image into a zero'th direct image in order to apply the method of computation given in \cite{Be09}. 

In the present work we compute the curvature of the higher direct images $R^{n-p}f_*\Omega^p_{\X/S}(L)$, where $(L,h) \to \X$ is a hermitian line bundle which is positive along the fibers. The main motivation for this is the observation that Berndtsson's formula given in \cite[Th.1.2]{Be11} coincides with Schumacher's formula \cite[Th.6]{Sch12} in the case $L=K_{\X/S}$. This fact suggests that Schumacher's method of computation can be carried over to the more general setting. By putting this into practice, the magic now lies behind the technique of taking Lie derivatives of line bundle valued forms along horizontal lifts.\footnote{Very recently, the article \cite{BPW17} appeared on the arXiv. The authors prove the same curvature formula by using a different method of computation.}

\section{Differential geometric setup and statement of results}
Let $f: \X \to S$ be a proper holomorphic submersion of complex manifolds with connected fibers and $L$ a line bundle on $\X$ with hermitian metric $h$. The curvature form of the hermitian line bundle is given by
$$
\omega_{\X}:=2\pi \cdot c_1(L,h)=-\sqrt{-1}\dl\dbar\log h.
$$ 
We consider the case where the hermitian bundle $(L,h)$ is relatively positive, which means that
$$
\omega_{X_s}:=\omega_{\X}|_{X_s}
$$
are K\"ahler forms on the $n$-dimensional fibers $X_s$. Then one has the notion of the horizontal lift $v_s$ of a tangent vector $\dl_s$ on the base $S$ and we get a representative of the Kodaira-Spencer class by
$$
A_s:=\dbar(v_s)|_{X_s}.
$$
Furthermore, one sets
$$
\varphi:=\<v_s,v_s\>_{\omega_{\X}},
$$
which is called the geodesic curvature. The coherent sheaf $R^{n-p}f_*\Omega^p_{\X/S}(L)$ is locally free on $S$ outside a proper subvariety. In the case $n=p$ and $L$ ample, the sheaf $f_*(K_{\X/S}\otimes L)$ is locally free by the Ohsawa-Takegoshi extension theorem (see \cite{Be09}). We assume the local freeness of
$$
R^{n-p}f_*\Omega^p_{\X/S}(L)
$$
in the general case, hence the statement of the Grothendieck-Grauert comparison theorem holds. Now Lemma 2 of \cite{Sch12} applies, which says that we can represent local sections of $R^{n-p}f_*\Omega^p_{\X/S}(L)$ by $\dbar$-closed $(0,n-p)$-forms with values in $\Omega_{\X/S}^p(L)$ on the total space, whose restrictions to the fibers are harmonic $(p,n-p)$-forms with values in $L$. Let $\{\psi^1,\ldots,\psi^r\}$ be a local frame of the direct image consisting of such sections around a fixed point $s \in S$. We denote by $\{(\dl/\dl s_i)\;|\;i=1,\ldots,\dim S\}$ a  basis of the complex tangent space $T_sS$ of $S$ over $\C$, where $s_i$ are local holomorphic coordinates on $S$. Let $A_{i\lb}^{\la}(z,s)\dl_{\la}dz^{\lb}=\dbar(v_i)|_{X_s}$ be the $\dbar$-closed representative of the Kodaira-Spencer class of $\dl_i$ described above. Then the cup product together with contraction define maps
\begin{eqnarray*}
A_{i\lb}^{\la}\dl_{\la}dz^{\lb}\cup \; : \A^{0,n-p}(X_s,\Omega^p_{X_s}(L|_{X_s})) \to \A^{0,n-p+1}(X_s,\Omega^{p-1}_{X_s}(L|_{X_s})) \\
A_{\jbar\la}^{\lb}\dl_{\lb}dz^{\la}\cup \; : \A^{0,n-p}(X_s,\Omega^p_{X_s}(L|_{X_s})) \to \A^{0,n-p-1}(X_s,\Omega^{p+1}_{X_s}(L_{X_s}))\end{eqnarray*}  
where $p>0$ in the first and $p<n$ in the second case. Note that this is a formal analogy to the derivative of the period map in the classical case (see \cite{Gr70}). We will apply the above cup products to harmonic $(p,n-p)$-forms. In general, the results are not harmonic. 

When applying the Laplace operator to $(p,q)$-forms with values in $L$ on the fibers $X_s$, we have 
\begin{equation*}
\laplace - \laplacedbar = (n-p-q)\cdot \id
\end{equation*}
due to the definition $\omega_{X_s}=\omega_{\X}|_{X_s}$ and the Bochner-Kodaira-Nakano identity (see also the proof of Corollary \ref{InfTriv}). Thus, we write $\Box=\laplace=\laplacedbar$ in the case $q=n-p$. By considering an eigenform decomposition and using the above identity, we obtain that all eigenvalues of 
$\Box$ are $0$ or greater than $1$, hence the operator $(\Box-1)^{-1}$ exists. We use the notation 
$\psi^{\lbar}:=\ovl{\psi^l}$ for sections $\psi^l$ and write $g\,dV =\omega_{X_s}/n!$. The main result is
\begin{theorem}
\label{Thm}
\label{mainresult}
Let $f: \X \to S$ be a proper holomorphic submersion of complex manifolds and $(L,h) \to \X$ a relatively positive line bundle. With the objects described above, the curvature of $R^{n-p}f_*\Omega_{\X/S}^p(L)$ is given by 
\begin{eqnarray*}
R_{i\jbar}^{\lbar k}(s) = &&\int_{X_s}{\varphi_{i\jbar}\cdot(\psi^k\cdot\psi^{\lbar})\,g\,dV}\\
&+& \int_{X_s}{(\Box +1)^{-1}(A_i \cup \psi^k)\cdot(A_{\jbar}\cup\psi^{\lbar})\,g\,dV}\\
&+& \int_{X_s}{(\Box -1)^{-1}(A_i \cup \psi^{\lbar})\cdot(A_{\jbar}\cup\psi^{k})\,g\,dV}\\
\end{eqnarray*}
If $(L,h)\to \X$ is positive, the only contribution, which may be negative, originates from the harmonic parts in the third term
$$
-\int_{X_s}{H(A_i\cup\psi^{\lbar})\cdot H(A_j\cup\psi^{\ovl{k}})\,g\,dV}.
$$ 
\end{theorem}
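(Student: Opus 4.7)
The plan is to adapt the method of Schumacher \cite{Sch12} from the canonically polarised setting to an arbitrary relatively positive $(L,h)$. Invoking the cited Lemma 2 of \cite{Sch12}, I pick a local frame $\{\psi^k\}$ of $R^{n-p}f_*\Omega^p_{\X/S}(L)$ by $\dbar$-closed $L$-valued $(0,n-p)$-forms with values in $\Omega^p_{\X/S}$ whose restrictions to the fibres are harmonic, and express the induced hermitian metric as
\[
H^{k\lbar}(s)=\int_{X_s}\psi^k\cdot\psi^{\lbar}\,g\,dV .
\]
At a reference point $s_0$ I normalise the frame so that $H^{k\lbar}(s_0)=\delta^{kl}$ and $\partial_i H^{k\lbar}(s_0)=0$; the Chern curvature then equals the pure second derivative $R_{i\jbar}^{\lbar k}(s_0)=-\partial_i\partial_{\jbar}H^{k\lbar}|_{s_0}$, reducing the problem to a single computation on $\X$.

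To pull the $s$-derivatives inside the fibre integral I use Lie derivatives along the horizontal lifts $v_i$ of $\partial/\partial s_i$ with respect to $\omega_{\X}$, which is precisely the technique advertised in the introduction. The identity $\dbar(v_i)|_{X_s}=A_i$ feeds the Kodaira--Spencer class into the calculation: commuting $L_{v_i}$ past $\dbar$ and exploiting the $\dbar$-closedness of $\psi^k$ on the total space produces the cup products $A_i\cup\psi^k$ and $A_{\jbar}\cup\psi^{\lbar}$ displayed in the theorem. The geodesic-curvature term $\int\varphi_{i\jbar}\psi^k\psi^{\lbar}\,g\,dV$ arises from the second-order action of $L_{v_i}$ and $L_{\overline{v_j}}$ on the line-bundle metric $h$ and on the fibre volume $g\,dV$, in direct parallel with \cite{Sch12}.

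The main technical obstacle is handling the non-harmonic pieces. The forms $A_i\cup\psi^k$ and $A_i\cup\psi^{\lbar}$ are not fibrewise harmonic, so extracting their harmonic-orthogonal parts requires a Green-operator inversion. The shifts $\pm 1$ originate from the Bochner--Kodaira--Nakano identity $\laplace-\laplacedbar=(n-p-q)\,\id$ together with the hypothesis $\sqrt{-1}\Theta(L)|_{X_s}=\omega_{X_s}$: although on middle-bidegree forms (where $p+q=n$) the two Laplacians coincide, the computation forces one to invert them on adjoining bidegrees where the gap equals $\pm 1$. Re-expressing such inverses in terms of the middle-degree operator $\Box=\laplace=\laplacedbar$ then produces the operators $(\Box+1)^{-1}$ and $(\Box-1)^{-1}$ of the second and third integrals, well-defined thanks to the spectral gap of $\Box$ recalled earlier in the excerpt.

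The most delicate step is the sign of the harmonic contribution in the third integral: since $\Box$ vanishes on harmonic forms, $(\Box-1)^{-1}$ acts as $-\id$ on the harmonic projection, which is exactly what produces the negative display $-\int H(A_i\cup\psi^{\lbar})\cdot H(A_{\jbar}\cup\psi^{k})\,g\,dV$. Once the three integrals are assembled, Nakano positivity when $(L,h)$ is globally positive on $\X$ is transparent: $\varphi_{i\jbar}$ is then non-negative, $(\Box+1)^{-1}$ is a strictly positive self-adjoint operator, and the only possible obstruction is the harmonic contribution isolated at the end.
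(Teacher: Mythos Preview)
Your outline follows the paper's approach closely and the strategy is correct: Lie derivatives along horizontal lifts, the type decomposition of $L_v\psi$, Green-operator manipulations together with the Bochner--Kodaira--Nakano shift $\laplace-\laplacedbar=(n-p-q)\,\id$, and the observation that $(\Box-1)^{-1}$ acts as $-\id$ on harmonic forms. Two points deserve sharpening.

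First, your attribution of the geodesic-curvature term is slightly off. The fibre volume satisfies $L_v(g\,dV)=0$, so nothing comes from there; and the $\varphi_{i\jbar}$ contribution is not simply the variation of $h$. In the paper the mechanism is the commutator identity for \emph{bundle-valued} Lie derivatives,
\[
L_{\vbar}L_v-L_vL_{\vbar}=L_{[\vbar,v]}+\Theta(L)_{\vbar\, v},
\]
together with $\Theta(L)_{\vbar\, v}=-\varphi$ and the fact that the $L_{[\vbar,v]}$ piece pairs to zero against the harmonic $\psi^l$ (using $\dl$- and $\dbar$-closedness). This commutator formula, specific to forms with values in a hermitian line bundle, is exactly where the generalisation beyond the K\"ahler--Einstein case lives, so it should be stated explicitly rather than absorbed into a phrase about differentiating $h$.

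Second, ``commuting $L_{v_i}$ past $\dbar$'' hides the real work. The paper needs a package of six identities (its Proposition~3), notably $\dbar(L_v\psi)'=\dl(A_s\cup\psi)$, $\dbar^*(L_v\psi)'=0$, $\dl^*(A_s\cup\psi)=0$ and their conjugate analogues, to convert $\langle L_v\psi^k,L_v\psi^l\rangle$ and $\langle L_{\vbar}\psi^k,L_{\vbar}\psi^l\rangle$ into the displayed $(\Box\pm1)^{-1}$ integrals. These identities use the symmetry $A_{i\,\lb\,\ld}=A_{i\,\ld\,\lb}$ and the fibrewise $\dl$- and $\dl^*$-closedness of $\psi$ (via Lemma~\ref{BKN}), and they are the step where one must check that $\dbar^*$-closedness of $A_s$ is \emph{not} used---the point that distinguishes this computation from Schumacher's original. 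Your sketch should name these identities and indicate why they survive without the K\"ahler--Einstein hypothesis.
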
 
\begin{remark}
The same method of computation gives a formula for a relatively negative line bundle $(L,h)$, where we set $\omega_{\X}=-2\pi \cdot c_1(L,h)$ in this case. 
\end{remark}
\begin{corollary}[compare \cite{Be09}, Th.1.2 and \cite{Be11}, Th.1.2]
If $L\to \X$ is a (semi-) positive line bundle, which is positive along the fibers, then $f_*(K_{\X/S}\otimes L)$ is Nakano (semi)- positive.
\end{corollary}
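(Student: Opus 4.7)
The plan is to specialize the Main Theorem to $p=n$, in which case $R^{n-p}f_*\Omega^p_{\X/S}(L)$ becomes $f_*(K_{\X/S}\otimes L)$ and the local sections $\psi^k$ are simply holomorphic $L$-valued $(n,0)$-forms on the fibers.

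The first simplification is that the third term in the curvature formula vanishes identically when $p=n$. The cup product $A_{\jbar\la}^{\lb}\dl_{\lb}dz^{\la}\cup\,\cdot$ appearing in that term is defined only for $p<n$, since its target $\A^{0,n-p-1}(X_s,\Omega^{p+1}_{X_s}(L|_{X_s}))$ involves $(p+1)$-forms, which vanish on an $n$-dimensional fiber when $p=n$. So only the first two terms of the formula survive, and in particular the ``dangerous'' harmonic contribution highlighted at the end of the Main Theorem is absent.

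To verify Nakano (semi-)positivity, I would contract the curvature tensor with an arbitrary bivector $u=(u^{ik})$ and show that $\sum R_{i\jbar}^{\lbar k}u^{ik}\overline{u^{jl}}\geq 0$. Setting $\xi:=\sum_{i,k}u^{ik}(A_i\cup\psi^k)$ converts the second term into $\int_{X_s}\<(\Box+1)^{-1}\xi,\xi\>\,g\,dV$, which is non-negative because $\Box$ is a non-negative self-adjoint operator, hence $(\Box+1)^{-1}$ is positive self-adjoint. Setting $\Psi^i:=\sum_k u^{ik}\psi^k$ converts the first term into $\int_{X_s}\varphi_{i\jbar}\,(\Psi^i\cdot\ovl{\Psi^j})\,g\,dV$, which is non-negative provided the Hermitian matrix $(\varphi_{i\jbar})$ is pointwise (semi-)positive definite on $\X$.

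The main obstacle is therefore the identification of the coefficients $\varphi_{i\jbar}$ with the total-space curvature components $\omega_{\X}(v_i,\bar v_j)=2\pi\,c_1(L,h)(v_i,\bar v_j)$, rather than merely with the base Hessian of the scalar geodesic curvature $\varphi=\<v_s,v_s\>_{\omega_{\X}}$. The two agree because the $v_i$ are horizontal by construction, so the fiber-direction contributions to $\omega_{\X}$ drop out of the pairing; this is the standard identity underlying the Schumacher/Berndtsson framework. Once this identification is in place, (semi-)positivity of $(L,h)$ on the total space $\X$ forces pointwise (semi-)positivity of $(\varphi_{i\jbar})$, and the two non-negative contributions combine to give Nakano (semi-)positivity of $f_*(K_{\X/S}\otimes L)$.
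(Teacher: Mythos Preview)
Your proof is correct and follows essentially the same route as the paper: specialize to $p=n$ so the third term drops for degree reasons, note that $(\Box+1)^{-1}$ is a positive operator, and then argue that the matrix $(\varphi_{i\jbar})$ is pointwise (semi-)positive when $(L,h)$ is. The only cosmetic difference is in this last step: the paper extracts $(\varphi_{i\jbar})$ as the coefficient in $\omega_{\X}^{n+1}=\omega_{\X/S}^n\wedge\sum\sqrt{-1}\varphi_{i\jbar}\,ds^i\wedge d\sbar^j$ (modulo higher order base terms), whereas you invoke directly $\varphi_{i\jbar}=\omega_{\X}(v_i,\bar v_j)$; but these are the same identity, and in fact the latter is simply the definition of $\varphi_{i\jbar}$ by polarization of $\varphi=\langle v_s,v_s\rangle_{\omega_{\X}}$, so the ``main obstacle'' you flag is not an obstacle at all.
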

\begin{proof}
Because of degree reasons, the third term in Theorem \ref{Thm} vanishes for $p=n$. The operator $(\Box+1)^{-1}$ is positive. Furthermore, we have
$$
\omega_{\X}^{n+1} = \omega_{\X/S}^n\sum\sqrt{-1}\varphi_{i\,\jbar}ds^i\wedge ds^{\jbar} = 
\sum\sqrt{-1}\varphi_{i\,\jbar}\cdot ds^i\wedge ds^{\jbar}\,g\,dV \quad \mathrm{modulo} \mbox{ higher order terms in } s^i, s^{\jbar}.
$$
Hence, the matrix $(\varphi_{i\jbar})$ is positive definite if $L$ is positive.
\end{proof}
\begin{corollary}[\cite{Sch12}, Th.6]
If $\X\to S$ is a family of canonically polarized compact complex manifolds, then the curvature tensor of $R^{n-p}f_*\Omega^p_{\X/S}(K_{\X/S})$ is given by
\begin{eqnarray*}
R_{i\jbar}^{\lbar k}(s) = &&\int_{X_s}{(\Box +1)^{-1}(A_i\cdot A_{\jbar})\cdot(\psi^k\cdot\psi^{\lbar})\,g\,dV}\\
&+& \int_{X_s}{(\Box +1)^{-1}(A_i \cup \psi^k)\cdot(A_{\jbar}\cup\psi^{\lbar})\,g\,dV}\\
&+& \int_{X_s}{(\Box -1)^{-1}(A_i \cup \psi^{\lbar})\cdot(A_{\jbar}\cup\psi^{k})\,g\,dV}\\
\end{eqnarray*}
\begin{proof}
The K\"ahler-Einstein metrics $\omega_{X_s}=\sqrt{-1}g_{\la\lb}(z,s)dz^{\la}\wedge dz^{\lb}$ on the fibers induce a hermitian metric on the relative canonical bundle $g^{-1}=(\det{g_{\la\lb}})^{-1}$ with curvature form $\omega_{\X}$. The K\"ahler-Einstein condition gives $\omega_{\X}|_{X_s}=\omega_{X_s}$. Furthermore, we have the elliptic equation (see \cite{Sch93})
$$
(\Box + 1)\varphi_{i\jbar} = A_i\cdot A_{\jbar}.
$$
Note also that the representatives $A_i$ are harmonic in this special case.
\end{proof}
\end{corollary}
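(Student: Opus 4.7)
The plan is a direct specialization of Theorem~\ref{Thm} to the case $(L,h)=(K_{\X/S},\,g^{-1})$, where the metric is built from the fiberwise K\"ahler-Einstein forms $\omega_{X_s}=\sqrt{-1}\,g_{\la\lb}(z,s)\,dz^{\la}\wedge dz^{\lb}$. The corollary will follow by substituting an elliptic identity of Schumacher into the first term of the main formula.

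First I would verify that the hypotheses of Theorem~\ref{Thm} are met. The induced metric $g^{-1}=(\det g_{\la\lb})^{-1}$ on $K_{\X/S}$ has curvature form $\omega_{\X}=-\sqrt{-1}\,\dl\dbar\log g^{-1}$, and the K\"ahler-Einstein equation $\operatorname{Ric}(\omega_{X_s})=-\omega_{X_s}$ forces the matching $\omega_{\X}|_{X_s}=\omega_{X_s}$. Hence $(K_{\X/S},g^{-1})$ is relatively positive, and the horizontal lifts $v_i$, the Kodaira-Spencer representatives $A_i=\dbar(v_i)|_{X_s}$, the geodesic curvature $\varphi_{i\jbar}$, and the fiberwise Laplacian $\Box=\laplace=\laplacedbar$ used in the general setup all coincide with their K\"ahler-Einstein counterparts. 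Theorem~\ref{Thm} therefore outputs its three-term formula with $\varphi_{i\jbar}$ appearing in the first integrand.

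The next step is to rewrite this first term. In the K\"ahler-Einstein setting Schumacher \cite{Sch93} established the elliptic identity $(\Box+1)\varphi_{i\jbar}=A_i\cdot A_{\jbar}$; since $\Box$ is non-negative on functions, the operator $\Box+1$ is invertible and
$$ \varphi_{i\jbar}=(\Box+1)^{-1}(A_i\cdot A_{\jbar}). $$
Plugging this into the first line of Theorem~\ref{Thm} converts $\int_{X_s}\varphi_{i\jbar}\cdot(\psi^k\cdot\psi^{\lbar})\,g\,dV$ into $\int_{X_s}(\Box+1)^{-1}(A_i\cdot A_{\jbar})\cdot(\psi^k\cdot\psi^{\lbar})\,g\,dV$, which is exactly the first line of the corollary; the second and third lines are inherited verbatim. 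I would close by recording, as in the paper's proof sketch, that in the K\"ahler-Einstein case the representatives $A_i$ are automatically harmonic, so the pointwise product $A_i\cdot A_{\jbar}$ on the right-hand side is well-defined without any extra harmonic projection.

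There is no deep obstacle in the deduction itself; the only point requiring care is to make sure that the fiberwise Laplacian and metric data on which Schumacher's elliptic equation is built are literally the same as those appearing in Theorem~\ref{Thm}. This compatibility is exactly what the first step secures via $\omega_{\X}|_{X_s}=\omega_{X_s}$, so once \cite{Sch93} is cited the corollary reduces to a one-line substitution into the main theorem.
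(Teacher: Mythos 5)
Your proposal is correct and follows essentially the same route as the paper: verify that the Kähler--Einstein metric induces a relatively positive metric on $K_{\X/S}$ with $\omega_{\X}|_{X_s}=\omega_{X_s}$, then substitute Schumacher's elliptic identity $(\Box+1)\varphi_{i\jbar}=A_i\cdot A_{\jbar}$ into the first term of Theorem \ref{Thm}. The only difference is that you spell out the verification of the hypotheses slightly more explicitly than the paper does.
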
       
\begin{corollary}
\label{InfTriv}
The direct images $R^{n-p}f_*\Omega^p_{\X/S}(L)$ are all Nakano positive if $L$ is positive and $\X \to S$ everywhere infinitesimal trivial. In particular, we obtain positivity if the family $\X \to S$ is locally trivial.  
\end{corollary}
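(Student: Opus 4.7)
The plan is to read Nakano positivity off the curvature formula of Theorem~\ref{mainresult}, using the remark immediately after it: under positivity of $(L,h)$, the only contribution to the curvature tensor that can fail to be non-negative is the harmonic part of the third integral,
\[
-\int_{X_s} H(A_i\cup\psi^{\lbar})\cdot H(A_j\cup\psi^{\ovl{k}})\,g\,dV.
\]
Hence the task reduces to showing $H(A_i\cup\psi^k)=0$ and $H(A_i\cup\psi^{\lbar})=0$ for all $i,k,l$ under the triviality hypothesis.

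I would first unpack the hypothesis. Everywhere infinitesimal triviality is precisely vanishing of the Kodaira--Spencer map $\rho_s\colon T_sS\to H^1(X_s,T_{X_s})$ at every $s\in S$; equivalently, $A_i=\dbar v_i|_{X_s}$ is $\dbar$-exact on each fiber, $A_i=\dbar\beta_i$ for some smooth section $\beta_i$ of $T_{X_s}$. Since $\psi^k$ is harmonic and in particular $\dbar$-closed, the Leibniz rule for the contraction cup product gives $A_i\cup\psi^k=\dbar(\beta_i\cup\psi^k)$; the harmonic projection of a $\dbar$-exact form vanishes, so $H(A_i\cup\psi^k)=0$.

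The harder statement is $H(A_i\cup\psi^{\lbar})=0$, because $\psi^{\lbar}$ has bidegree $(n-p,p)$ and is only $\partial$-closed a priori. Here I use that $(p,n-p)$ is the critical bidegree for the identity $\laplace-\laplacedbar=(n-p-q)\,\id$ recalled in the paper, so $\laplace=\laplacedbar$ on $(p,n-p)$-forms with values in $L$. Consequently every harmonic $\psi^l$ satisfies $\partial\psi^l=0$ as well, and conjugating gives $\dbar\psi^{\lbar}=\ovl{\partial\psi^l}=0$. The Leibniz rule then applies to yield $A_i\cup\psi^{\lbar}=\dbar(\beta_i\cup\psi^{\lbar})$; this form is $\dbar$-exact and its harmonic projection vanishes.

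With both harmonic projections killed, the dangerous contribution disappears and the curvature tensor is Nakano non-negative by the remark. Strict positivity then follows from the first term $\int_{X_s}\varphi_{i\jbar}(\psi^k\cdot\psi^{\lbar})\,g\,dV$: positivity of $L$ makes $(\varphi_{i\jbar})$ positive definite (as recalled in the proof of the first corollary), and for any nonzero test tensor $u^{ik}$ the harmonic combination $\sum_k u^{ik}\psi^k$ is nonzero on an open dense subset of $X_s$ by analytic unique continuation, so the integrand is pointwise nonnegative and strictly positive on a set of positive measure. The ``in particular'' clause on locally trivial families is automatic, since a local product structure forces the Kodaira--Spencer map to vanish. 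The main obstacle in the argument is the upgrade from $\dbar\psi^l=0$ to $\dbar\psi^{\lbar}=0$, which hinges on the critical-bidegree identity $\laplace=\laplacedbar$; without it the second Leibniz step that propagates $\dbar$-exactness of $A_i$ to $\dbar$-exactness of $A_i\cup\psi^{\lbar}$ would fail.
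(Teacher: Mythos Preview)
Your argument is correct and follows the same route as the paper: write $A_i=\dbar\beta_i$ by infinitesimal triviality, use the critical-bidegree identity $\laplace=\laplacedbar$ on $(p,n-p)$-forms to obtain $\partial\psi^l=0$ (equivalently $\dbar\psi^{\lbar}=0$), and conclude that $A_i\cup\psi^{\lbar}=\dbar(\beta_i\cup\psi^{\lbar})$ is $\dbar$-exact, so its harmonic projection vanishes. Your first step showing $H(A_i\cup\psi^k)=0$ is correct but unnecessary (the second integral is non-negative regardless, since $(\Box+1)^{-1}$ is positive), and your explicit strict-positivity argument via the first term and unique continuation is a welcome elaboration that the paper leaves implicit.
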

\begin{proof}
If $\X \to S$ is infinitesimal trivial, we have $A_i=\dbar(b_i)$ for a differentiable vector field $b_i^{\la}\dl_{\la}$ on the fiber 
$X_s$, because $A_i$ represents the Kodaira-Spencer class and hence needs to be $\dbar$-exact. 
The Bochner-Kodaira-Nakano identity says (on the fiber $X_s$)
$$
\laplacedbar - \laplace = \left[\sqrt{-1}\Theta(L),\Lambda\right].
$$
But by definition, we have $\omega_{X_s}=\sqrt{-1}\Theta(L)|_{X_s}$. Furthermore, it holds (see \cite[Cor.VI.5.9]{De12})
$$
\left[L_{\omega},\Lambda_{\omega}\right]u = (p+q-n)u \quad \mbox{for } u \in \A^{p,q}(X_s,L|_{X_s}).
$$
Thus, the $\laplacedbar$-harmonic $(p,n-p)$-form $\psi^{\l}$ is also harmonic with respect to $\laplace$, in particular $\dl$-exact.
Therefore, 
$$
A_i\cup\psi^{\lbar}=\dbar(b_i)\cup\psi^{\lbar}=\dbar(b_i\cup\psi^{\lbar}),
$$
so the harmonic part of $A_i\cup\psi^{\lbar}$ must vanish.
\end{proof}
Note that for a trivial fibration $X \times S \to S$, the pullback of an ample line bundle $\hat{L} \to X$ and a family of positive metrics on $L$ which give a semi-positive metric on the pullback, we obtain Nakano semi-positivity of the trivial vector bundle $H^{n-p}(X,\Omega^p_X(L))\otimes \Oh_S$ on $S$ equipped with a (possibly) non-trivial metric.   

After introducing Lie derivatives of line bundle valued forms (see also appendix \ref{LieDer}), we can use the method of computation given in \cite{Sch12,Sch13} in the more general setting. The point is that the computation given there carries over verbatim if one sets $m=1$ and replaces $K_{\X/S}$ by $L$. One has to check that there is no point where the 
$\dbar^*$-closedness of $A_s$ is used, which is a crucial fact. Moreover, there is no elliptic equation for $\varphi_{i\jbar}$ in general. Thus, we must not replace $\varphi_{i\jbar}$ by $(\Box+1)^{-1}(A_i\cdot A_{\jbar})$. Finally note that by definition
$$
\omega_{X_s} = \omega_{\X}|_{X_s}.
$$
Then the computation works without the K\"ahler-Einstein condition. We give the details of the computation in the general setting in the rest of the article.

\section{Preparations}
\subsection{Fiber integrals and Lie derivatives}
Given a family $f: \X \to S$ of compact complex manifolds $X_s$ of dimension $n$ and a $C^{\infty}$ differential form $\eta$ of degree $2n+r$,
the fiber integral
$$
\int_{\X/S}{\eta}
$$
is a differential form of degree $r$ on $S$. In particular if $\eta$ is a $(n,n)$-form, we get a function on the base $S$. If $s^1,\ldots,s^r$ are local holomorphic coordinates on the base $S$, we need to compute the derivatives
$$
\frac{\dl}{\dl s^k}\int_{X_s}\eta \quad  \mbox{for }1\leq i \leq r \quad\mbox{ and } \quad \frac{\dl}{\dl s^{\lbar}}\int_{X_s}\eta, \quad \mbox{for }
1\leq l \leq r.
$$ 
This can be done by using Lie derivatives:
\begin{lemma}[\cite{Sch12}, Lemma 1]
\label{lemma 1}
For $1\leq k \leq r$, let $w_k$ be a differentiable vector field, whose projection to $S$ equals $\dl/\dl s^k$. We write $\dl/\dl s^{\lbar}$ for 
$\dl/\dl \ovl{s^l}$ and $w_{\lbar}$ for $\ovl{w_l}$. Then
$$
\frac{\dl}{\dl s^k}\int_{X_s}\eta = \int_{X_s}L_{w_k}(\eta) \quad \mbox{and} \quad \frac{\dl}{\dl s^{\lbar}}\int_{X_s}\eta = \int_{X_s}L_{w_{\lbar}}(\eta),
$$
where $L_{w_k}$ and $L_{w_{\lbar}}$ denotes the Lie derivative in the direction of $w_k$ and $w_{\lbar}$ respectively.
\begin{proof}
By Cartan's magic formula, we have
$$
L_{w_k} = d \circ \delta_{w_k} + \delta_{w_k} \circ d,
$$
where $d$ means exterior derivative on $\X$ and $\delta_{w_k}$ contraction with the vector field $v_k$. Because $d$ commutes with the fiber integration and 
$$
\delta_{\dl/\dl s^k}\int_{\X/S}(\eta) = \int_{\X/S}\delta_{w_k}(\eta),
$$ 
the assertion follows and analogous for the second identity. 
\end{proof}
\end{lemma}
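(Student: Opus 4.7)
The plan is to apply Cartan's magic formula $L_X = d\circ\iota_X + \iota_X\circ d$ to both sides of the desired identity, and then swap fiber integration with $d$ and with contraction. This relies on two standard properties of fiber integration along a proper submersion with compact fibers of real dimension $2n$: (P1) $d$ commutes with $\int_{\X/S}$, and (P2) for a base vector field $Y$ on $S$ and any $C^\infty$ lift $\tilde Y$ on $\X$, $\iota_Y \int_{\X/S}\eta = \int_{\X/S}\iota_{\tilde Y}\eta$. Property (P1) is a consequence of Stokes' theorem (compact fibers have empty boundary), and (P2) is a pointwise identity in a local frame splitting $T\X$ into horizontal and vertical parts.

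First I would fiber-integrate Cartan's formula applied to $\eta$ on $\X$:
$$
\int_{\X/S} L_{w_k}\eta \;=\; \int_{\X/S} d\iota_{w_k}\eta \;+\; \int_{\X/S}\iota_{w_k}d\eta.
$$
Next I would move $d$ and $\iota_{w_k}$ outside the fiber integral using (P1) and (P2), rewriting the right-hand side as $d\,\iota_{\partial/\partial s^k}\!\int_{\X/S}\eta + \iota_{\partial/\partial s^k}\,d\!\int_{\X/S}\eta$. Cartan's formula applied on the base then identifies this with $L_{\partial/\partial s^k}\!\int_{\X/S}\eta$. In the setting of the lemma, $\int_{\X/S}\eta$ is a function on $S$ (since $\eta$ has fiber-top degree $2n$), so the base Lie derivative reduces to $\partial/\partial s^k$ acting as a scalar derivation, yielding the first claimed identity. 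The antiholomorphic version is obtained by running the same steps with $w_{\bar l} = \overline{w_l}$, or by direct complex conjugation.

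The main potential obstacle is nothing deeper than a careful verification of (P1) and (P2). For (P1), one chooses a local trivialization of $f$ over an open subset of $S$, expresses $\eta$ in product coordinates, and uses compactness of the fiber together with smoothness of $\eta$ to interchange base differentials with fiber integration. For (P2), the identity is checked pointwise in a horizontal-vertical frame: the horizontal component of a lift $\tilde Y$ projects to $Y$ and contributes outside the fiber integral, while its vertical component drops out because the remaining form has too low a fiber degree. These are routine facts from the theory of integration along the fibers; once invoked, the proof is the two-line formal manipulation above.
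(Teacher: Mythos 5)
Your proposal is correct and follows essentially the same route as the paper: apply Cartan's magic formula on the total space, commute $d$ with fiber integration, and use the identity $\delta_{\dl/\dl s^k}\int_{\X/S}\eta = \int_{\X/S}\delta_{w_k}(\eta)$ to pull the contraction outside, then reassemble via Cartan's formula on the base. The only difference is that you spell out the verification of the two commutation properties, which the paper takes as known facts about fiber integration.
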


\subsection{Direct images and differential forms}
Let $f: \X \to S$ be a smooth proper family of K\"ahler manifols $X_s$ and $(\E,h)\to \X$ a hermitian holomorphic vector bundle on $\X$. We assume that the direct image $R^qf_*\E$ is locally free and furthermore that for all $s\in S$ the cohomology $H^{q+1}(X_s,\E\otimes \Oh_{X_s})$ vanishes. Thus the Grothendieck-Grauert comparison theorem holds for $R^qf_*\E$ and we can identify the fiber $R^qf_*\E\otimes_{\Oh_S}\C(s)$ with 
$H^q(X_s,\E\otimes_{\Oh_{\X}}\Oh_{X_s})$. The sections of the q-th direct image sheaf $R^qf_*\E$ can locally, after replacing $S$ by a neighborhood of a given point, be represented in terms of Dolbeault cohomology by $\dbar$-closed $(0,q)$-forms with values in $\E$. But on the fibers $H^q(X_s,\E_s)$ the K\"ahler forms and the hermitian metrics on the fibers give rise to harmonic representatives of cohomology classes. The next Lemma of Schumacher  is crucial for the later computations: 

\begin{lemma}[\cite{Sch12}, Lemma 2]
\label{lemma 2}
Let $\tilde{\Psi} \in R^qf_*\E$ be a section and $\psi_s \in \A^{0,1}(X_s,\E_s)$ the harmonic representatives of the cohomology classes 
$\tilde{\Psi}|_{X_s}$. Then locally with respect to $S$ there exists a $\dbar$-closed form $\Psi \in \A^{0,q}(\X,\E)$, which is a Dolbeault representative of $\tilde{\Psi}$ and whose restrictions to the fibers $X_s$ are $\psi_s$.
\end{lemma}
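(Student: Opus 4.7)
The plan is to start from an arbitrary $\dbar$-closed Dolbeault representative of $\tilde\Psi$ on a neighborhood of the given point of $S$, and then correct it by a $\dbar$-exact form so that the fiberwise restrictions become precisely the harmonic representatives. The essential analytic ingredient will be smooth dependence of the fiberwise Green's operators on $s$.

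First, I would shrink $S$ to a neighborhood $U$ of the distinguished point and pick any $\dbar$-closed form $\Psi' \in \A^{0,q}(f^{-1}(U),\E)$ that is a Dolbeault representative of $\tilde\Psi|_U$; such a $\Psi'$ exists because, under the hypotheses of Grothendieck--Grauert, sections of $R^qf_*\E$ are represented by $\dbar$-closed $(0,q)$-forms with values in $\E$ on the total space. For each $s \in U$, write $\laplacedbar^{s}$ for the Dolbeault Laplacian on $\A^{0,q}(X_s,\E_s)$, denote by $H_s$ the orthogonal projection onto $\ker \laplacedbar^{s}$, and let $G_s$ be the associated Green's operator. Since $\Psi'|_{X_s}$ is $\dbar_s$-closed, Hodge theory on $X_s$ gives
\[
\Psi'|_{X_s} \;=\; H_s(\Psi'|_{X_s}) \;+\; \dbar_s\bigl(\dbar_s^{*}G_s(\Psi'|_{X_s})\bigr),
\]
and by construction $H_s(\Psi'|_{X_s}) = \psi_s$. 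Setting $\chi_s := \dbar_s^{*}G_s(\Psi'|_{X_s}) \in \A^{0,q-1}(X_s,\E_s)$ then yields $\Psi'|_{X_s} - \psi_s = \dbar_s\chi_s$.

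Next, the key technical point: the family $\{\chi_s\}_{s \in U}$ varies in a $C^\infty$ fashion with $s$. This follows from the local freeness of $R^qf_*\E$ together with the assumed vanishing of $H^{q+1}(X_s,\E\otimes\Oh_{X_s})$, which by semicontinuity force $\dim \ker \laplacedbar^{s}$ to be constant in $s$, so that the spectral projectors $H_s$ and hence $G_s$ depend smoothly on the parameter $s$ (standard perturbation theory for a smooth family of self-adjoint elliptic operators with constant-dimensional kernel). A $C^\infty$-trivialization of the submersion $f^{-1}(U) \to U$, obtained by shrinking $U$ further if necessary, allows me to assemble the $\chi_s$ into a single smooth section $\chi \in \A^{0,q-1}(f^{-1}(U),\E)$ satisfying $\chi|_{X_s} = \chi_s$ for every $s \in U$.

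Finally, I would set
\[
\Psi \;:=\; \Psi' - \dbar\chi \;\in\; \A^{0,q}(f^{-1}(U),\E).
\]
Then $\Psi$ is $\dbar$-closed, and since it differs from $\Psi'$ by a $\dbar$-exact form it represents the same Dolbeault class, hence the same section $\tilde\Psi$. Using that for a $(0,q-1)$-form $\chi$ on the total space one has $(\dbar\chi)|_{X_s} = \dbar_s(\chi|_{X_s})$ (the $d\sbar^{i}$-components disappear upon pullback to the fiber), we obtain
\[
\Psi|_{X_s} \;=\; \Psi'|_{X_s} - \dbar_s\chi_s \;=\; \psi_s,
\]
as required.

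The main obstacle is really the smooth dependence of $\chi_s = \dbar_s^{*}G_s(\Psi'|_{X_s})$ on the parameter $s$; once the local freeness hypothesis is invoked to pin down the dimension of the kernel, the remaining steps are standard Hodge theory plus a smooth trivialization of the family.
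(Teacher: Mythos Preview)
Your argument is correct and follows the same overall strategy as the paper: start from an arbitrary $\dbar$-closed representative, then modify by a global $\dbar$-exact term so that the fiberwise restrictions become harmonic. The differences are in the execution of the extension step. You construct $\chi_s$ explicitly via the fiberwise Green's operator and invoke smooth dependence of $G_s$ (constant kernel dimension, perturbation theory) to assemble a global $\chi$ through a $C^\infty$-trivialization of the family. The paper instead simply asserts the existence of a smooth relative form $\chi_{\X/S}$ with $\dbar_{\X/S}\chi_{\X/S}=\Psi_{\X/S}-\Phi_{\X/S}$ and then extends it to a genuine form on $\X$ by a partition of unity (locally extend, then patch). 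Your version is more explicit about the analytic input that makes the relative form smooth in $s$, which the paper leaves implicit; the paper's partition-of-unity extension, on the other hand, avoids appealing to a global trivialization. Both routes are standard and yield the same $\Psi$.
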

\begin{proof}
For the sake of completeness, we recall the simple argument from \cite{Sch12}. Let $\Phi \in \A^{0,q}(\X,\E)$ be a Dolbeault representative of 
$\tilde{\Psi}$. We denote by $\Phi_{\X/S}$ the induced relative $(0,q)$-form. The harmonic representatives $\psi_s$ give rise to a relative form 
$\Psi_{\X/S}$. There exists a relative $(0,q-1)$-form $\chi_{\X/S}$ on $\X$, such that the exterior derivative in fiber direction 
$\dbar_{\X/S}(\chi_{\X/S})$ satisfies
$$
\Psi_{\X/S} = \Phi_{\X/S} + \dbar_{\X/S}(\chi_{\X/S}).
$$ 
A relative form can locally be extended to a genuine form on $\X$. Denote by $\{U_i\}$ a covering of $\X$, which possesses a partition of unity $\{\rho_i\}$ such that all the restrictions $\chi_{\X/S}|_{U_i}$ can be extended to $(0,q-1)$-forms $\chi^i$ on $U_i$. Then we set 
$$
\chi := \sum \;\rho_i\,\chi^i.
$$
Because of $\chi^i_{\X/S}=\chi_{\X/S}|_{U_i}$ and the property of the partition of unity,  we have that the indued relative form of $\chi$ is indeed given by $\chi_{\X/S}$. Thus the form
$$
\Psi:=\Psi + \dbar \chi
$$ 
satisfies the requirements of the lemma. 
\end{proof}

\section{Computation of the curvature}
Computing the curvature of the $L^2$-metric on $R^{n-p}f_*\Omega_{\X/S}^p(L)$ means to take derivatives in the base direction of fiber integrals, which can be realized by taking Lie derivatives of the integrands. These Lie derivatives can be split up by introducing Lie derivatives of $(p,n-p)$-forms with values in $L$. They are computed in terms of covariant derivatives with respect to the Chern connection on 
$(X_s,\omega_s)$ and the hermitian holomorphic bundle $(L,h)|_{X_s}$.  
We use the symbol $;$ for covariant derivatives and $,$ for ordinary derivatives. Greek letters indicate the fiber direction, whereas latin indices stand for directions on the base. Because we are dealing with alternating $(p,q)$-forms, the coefficients are meant to be skew-symmetric. Thus every such $(p,q)$-form carries a factor $1/p!q!$, which we suppress in the notation. They play a role in the process of skew-symmetrizing the coefficients of a $(p,q)$-form by taking alternating sums of the (not yet skew-symmetric) coefficients. 
\subsection{Setup}
As above, we denote by $f: \X \to S$ a proper holomorphic submersion of complex manifolds, whose fibers $X_s$ have dimension $n$. We choose coordinates $z^{\la}$ on the fibers and coordinates $s^i$ on the base $S$, which together give coordinates on $\X$. We write 
$\dl_i=\dl/\dl s^i$ and $\dl_{\la}=\dl/\dl z^{\la}.$
By using the notation from above, the horizontal lift of a tangent vector $\dl_i$ is given by
$$
v_i=\dl_i + a_i^{\la}\dl_{\la}
$$
and the corresponding $\dbar$-closed representative of the Kodaira-Spencer class $\rho(\dl_i)$ by
$$
A_i:=\dbar(v_i)|_{X_s}=A_{i\lb}^{\la}(z,s)\dl_{\la}dz^{\lb}
$$
with $A_{i\lb}^{\la}=a_{i;\lb}^{\la}$. It follows immediately form the definition that these Kodaira-Spencer forms induce symmetric tensors:
\begin{corollary}
Let $A_{i\lb\,\ld}=g_{\la\lb}A_{i\ld}^{\la}$. Then
$$
A_{i\lb\,\ld}=A_{i\ld\, \lb}.
$$
\end{corollary}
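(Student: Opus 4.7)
The plan is to exploit the defining property of the horizontal lift together with the $d$-closedness of $\omega_\X$. First I would recall that, writing $\omega_\X = \sqrt{-1}\, g_{A\bar B}\, dz^A\wedge dz^{\bar B}$ with capital indices running over both base and fiber directions, the horizontal lift $v_i=\partial_i+a_i^\alpha \partial_\alpha$ is characterized by being $\omega_\X$-orthogonal to the antiholomorphic vertical tangents. This yields the fundamental identity
\[
g_{i\bar\beta}+g_{\alpha\bar\beta}\,a_i^\alpha=0.
\]

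Next I would observe that, since the Chern connection on the fiberwise holomorphic tangent bundle has no $(0,1)$-connection forms, the covariant derivative $A_{i\bar\delta}^\alpha=a_{i;\bar\delta}^\alpha$ coincides with the ordinary derivative $\partial_{\bar\delta} a_i^\alpha$. Differentiating the fundamental identity in the antiholomorphic fiber direction $\partial_{\bar\delta}$ produces
\[
g_{i\bar\beta,\bar\delta}+g_{\alpha\bar\beta,\bar\delta}\,a_i^\alpha+g_{\alpha\bar\beta}\,\partial_{\bar\delta} a_i^\alpha=0,
\]
expressing $g_{\alpha\bar\beta}\,\partial_{\bar\delta}a_i^\alpha$ in terms of derivatives of the components of $\omega_\X$.

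The decisive ingredient is then the closedness of $\omega_\X=-\sqrt{-1}\,\partial\bar\partial\log h$, which is equivalent to the symmetries $g_{A\bar B,\bar C}=g_{A\bar C,\bar B}$ for all (mixed) triples of indices. Applying this symmetry to both $g_{i\bar\beta,\bar\delta}$ and $g_{\alpha\bar\beta,\bar\delta}$, writing down the identity with $\bar\beta\leftrightarrow\bar\delta$ exchanged and subtracting, the metric-derivative terms cancel and I am left with
\[
g_{\alpha\bar\beta}\,\partial_{\bar\delta}a_i^\alpha = g_{\alpha\bar\delta}\,\partial_{\bar\beta}a_i^\alpha,
\]
which is exactly $A_{i\bar\beta\bar\delta}=A_{i\bar\delta\bar\beta}$.

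The proof is essentially algebraic; the only point worth emphasizing is conceptual rather than technical, namely that one must treat $\omega_\X$ as a genuine $(1,1)$-form on the total space and exploit the Kähler-type symmetries of its coefficients across base and fiber indices simultaneously. Once that viewpoint is adopted, no analysis is needed and the identity follows from a single differentiation and an index swap.
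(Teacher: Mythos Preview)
Your argument is correct and is precisely the elaboration of what the paper means by ``follows immediately from the definition'': you use the defining relation $g_{i\bar\beta}+g_{\alpha\bar\beta}a_i^\alpha=0$ of the horizontal lift, differentiate in $\bar\delta$, and invoke the $\bar\partial$-closedness of $\omega_\X$ to obtain the symmetry $g_{A\bar B,\bar C}=g_{A\bar C,\bar B}$. The paper does not spell out these steps, but your proof is exactly the intended one.
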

By polarization, it is sufficient to treat the case where $\dim S=1$ for the computation of the curvature, which simplifies the notation. Therefore, we set $s=s^1, v_s=v_1$, etc. We write $s,\sbar$ for the indices $1,\ovl{1}$ so that 
$$
v_s = \dl_s + a_{s}^{\la}\dl_{\la}
$$
and
$$
A_s = A_{s\lb}^{\la}\dl_{\la}dz^{\lb}.
$$
We assume local freeness of the sheaf $R^{n-p}f_*\Omega_{\X/S}^p(L)$. According to Lemma \ref{lemma 2}, we can represent local sections of this sheaf by $\dbar$-closed $(0,n-p)$-forms with values in $\Omega_{\X/S}^p(L)$, which restrict to harmonic $(p,n-p)$-forms on the fibers. We denote such a section by $\psi$. In local coordinates, we have
\begin{eqnarray*}
\psi|_{X_s} &=& \psi_{\la_1 \ldots \la_p\lb_{p+1}\ldots \lb_n}dz^{\la_1}\we \ldots dz^{\la_p}\we dz^{\lb_{p+1}}\we \ldots dz^{\lb_n}\\
&=& \psi_{A_p\Bbar_{n-p}}dz^{A_p}\we dz^{\Bbar_{n-p}},
\end{eqnarray*}
where $A_p=(\la_1,\ldots,\la_p)$ and $\Bbar_{n-p}=(\lb_{p+1},\ldots,\lb_n)$. The further component of $\psi$ is 
$$
\psi_{\la_1\ldots \la_p\lb_{p+1}\ldots \lb_{n-1}\sbar}dz^{\la_1}\we \ldots \we dz^{\la_p}\we dz^{\lb_{p+1}}\we \ldots dz^{\lb_{n-1}}\we d\sbar.
$$
The $\dbar$-closedness of $\psi$ means
\begin{eqnarray}
\label{dbar-closedness}
\psi_{A_p\lb_{p+1}\ldots \lb_{n-1}\sbar;\lb_n} = \psi_{A_p\lb_{p+1}\ldots\lb_{n-1}\lb_n,\sbar}.
\end{eqnarray}
\subsection{Cup product}
\begin{definition}
Let $s \in S$ and $A=A_{s\lb}^{\la}(z,s)\dl_{\la}dz^{\lb}$ be the Kodaira-Spencer form on the fiber $X_s$. The wedge product together with the contraction define maps
\begin{eqnarray*}
A \cup \rule{0.3cm}{0.4pt} : \A^{p,n-p}(X_s,L) \to \A^{p-1,n-p+1}(X_s,L) \label{A_p}\\
\ovl{A} \cup \rule{0.3cm}{0.4pt} : \A^{p,n-p}(X_s,L) \to \A^{p+1,n-p-1}(X_s,L), \label{Abar_p}
\end{eqnarray*}
where $0 \leq p \leq n$, which can be described locally by
\begin{eqnarray*}
\left( A_{s\ld}^{\lc}\dl_{\lc}dz^{\ld}\right) \cup \left( \psi_{\la_1 \ldots \la_p \lb_{p+1}\ldots \lb_n}\;dz^{\la_1}\we \ldots \we dz^{\la_p}\we dz^{\lb_{p+1}} \we \ldots \we dz^{\lb_n}\right)\\
= A_{s\lb_p}^{\lc} \psi_{\lc\la_1\ldots \la_{p-1}\lb_{p+1}\ldots \lb_n}\; dz^{\lb_p}\we dz^{\la_1} \we \ldots \we dz^{\la_{p-1}}\we dz^{\lb_{p+1}}\we \ldots \we dz^{\lb_n},\\
\left(\Abar_{\sbar\lc}^{\ld}\dl_{\lc}dz^{\ld}\right) \cup \left( \psi_{\la_1 \ldots \la_p \lb_{p+1}\ldots \lb_n}\;dz^{\la_1}\we \ldots \we dz^{\la_p}\we dz^{\lb_{p+1}} \we \ldots \we dz^{\lb_n}\right)\\
=\Abar^{\ld}_{\sbar\la_1}\psi_{\la_2\ldots \la_{p+1}\ld\,\lb_{p+2}\ldots \lb_n}\; dz^{\la_1}\we \ldots \we dz^{\la_{p+1}}\we dz^{\lb_{p+2}}\we \ldots \we dz^{\lb_n}.
\end{eqnarray*}
\end{definition}
\subsection{Lie derivatives}
Now we choose a local frame $\{\psi^1,\ldots,\psi^r\}$ according to Lemma \ref{lemma 2}. 
The components of the metric tensor for $R^{n-p}f_*\Omega^p_{\X/S}(L)$ on the base space $S$ are given by ($q=n-p$)
$$
H^{\lbar k}(s)=\<\psi^k,\psi^l\> = \int_{X_s}{\psi^k_{A_p\Bbar_q}\psi^{\lbar}_{C_q\ovl{D}_p}g^{\ovl{D}_pA_p}g^{\Bbar_{q}C_q}h \;g\,dV},
$$
which are integrals of inner products of harmonic representatives of the cohomology classes. We also write 
$$
\psi^k \cdot \psi^{\lbar}= \psi^k_{A_p\Bbar_q}\psi^{\lbar}_{C_q\ovl{D}_p}g^{\ovl{D}_pA_p}g^{\Bbar_{q}C_q}h
$$ 
for the pointwise inner product of $L$-valued $(p,q)$-forms. When we compute derivatives with respect to the base of theses fiber integrals, we apply Lie derivatives with respect to differentiable lifts of the tangent vectors according to Lemma \ref{lemma 1}. Here we choose the \emph{horizontal} lifts, which are in particular \emph{canonical} lifts in the sense of Siu \cite{Siu86}. This simplifies the computation in a considerable way. In order to break up the Lie derivative of the pointwise inner product, we need to introduce Lie derivatives of differential forms with values in a line bundle. This can be done by using the hermitian connection $\nabla$ on $\A^{(p,q)}(X_s,L_{|X_s})$ induced by the Chern connections on $(T_{X_s},\omega_{X_s})$ and $(L,h)$. We define the Lie derivative of $\psi$ with respect to the horizontal lift $v$ by using Cartan's formula
$$
L_v\psi := (\delta_v\circ \nabla + \nabla \circ \delta_v)\psi
$$
and similar for the Lie derivative with respect to $\vbar$.
We note that this definition extends the usual Lie derivative for ordinary tensors, which can as well be computed by using covariant differentiation. We refer to the appendix for properties of Lie derivatives and a short discussion of this concept. 

Taking Lie derivatives is not type-preserving. We have the type decomposition for $\psi=\psi^k$ or $\psi=\psi^l$ and $v=v_s$
$$
L_v\psi = L_v\psi' + L_v\psi'',
$$ 
where $L_v\psi'$ is of type $(p,n-p)$ and $L_v\psi''$ is of type $(p-1,n-p+1)$. In local coordinates, we have
\begin{eqnarray}
\label{Lv'}
L_v\psi'  = \left(\psi_{A_p\Bbar_{n-p};s} + a_s^{\la}\psi_{A_p\Bbar_{n-p};\la} + \sum_{j=1}^p{a_{s;\la_j}^{\la}
\psi_{
{\tiny\vtop{
\hbox{$\la_1\ldots\la\ldots\la_p\Bbar_{n-p}$}\vskip-.8mm
\hbox{$\phantom{\la_1\ldots}{|\atop j} $}}}}}
\right)
\;dz^{A_p}\wedge dz^{\Bbar_{n-p}}
\end{eqnarray}
\begin{eqnarray}
\label{Lv''}
L_v\psi'' &=& \sum^p_{j=1} A^\la_{s\lb_p}
\psi_{ {\tiny\vtop{ \hbox{$\la_1\ldots\la\ldots\la_p\Bbar_{n-p}$\;}\vskip-.8mm \hbox{$\phantom{\la_1\ldots}{|\atop j} $}}}}
\vtop{\hbox{$dz^{\la_1}\wedge\ldots\wedge dz^{\lb_p}\wedge\ldots\wedge
dz^{\la_p} \we dz^{\lb_{p+1}}\we\ldots\we
dz^{\lb_n}$}\hbox{$\phantom{dz^{\la_1}\we\ldots\we \; }{|\atop j} $}}
\end{eqnarray}
Similarly we have a type decomposistion for the Lie derivative along $\vbar = v_{\sbar}$ 
$$
L_{\vbar}\psi = L_{\vbar}\psi' + L_{\vbar}\psi'', 
$$
where $L_{\vbar}\psi'$ is of type $(p,n-p)$ and $L_{\vbar}\psi''$ is of type $(p+1,n-p-1)$. In local coordinates, this is
\begin{eqnarray}
\label{Lvbar'}
L_{\vbar}\psi'=\left(\psi_{A_p\Bbar_{n-p};\sbar}+a_{\sbar}^{\lb}\psi_{A_p\Bbar_{n-p};\lb} + \sum_{j=p+1}^n{a_{\sbar;\lb_j}^{\lb}\psi_{
{\tiny\vtop{
\hbox{$A_p \lb_{p+1}\ldots\lb\ldots\lb_n$}\vskip-.8mm
\hbox{$\phantom{A_p \cbar_{p+1}\ldots}{|\atop j}$}}}}}
\right)
\;dz^{A_p}\wedge dz^{\Bbar_{n-p}}
\end{eqnarray}
\begin{eqnarray}
\label{Lvbar''}
L_{\vbar}\psi'' &=& \sum^n_{j=p+1} A^{\lb}_{\sbar \la_{p+1}}
\psi_{\tiny\vtop{ \hbox{$A_p\lb_{p+1}\ldots\lb\ldots\lb_n\;$}\vskip-.8mm
\hbox{$\phantom{A_p\lb_{p+1}\ldots}{|\atop j}$}}} 
\vtop{\hbox{$dz^{\la_1}\we\ldots\we dz^{\la_p}\we dz^{\lb_{p+1}}\we\ldots\we dz^{\la_{p+1}}\we\ldots
\we dz^{\lb_n} $}
\hbox{$\phantom{dz^{\la_1}\we\ldots\we dz^{\la_p}\we dz^{\la_1}\we\ldots\we dz}{|\atop j}$}}
\end{eqnarray}
We also need the following lemma
\begin{lemma}
The Lie derivative of the volume element $g\, dV=\omega_s^n/n!$ along the horizontal lift $v$ vanishes, i.e.
$$
L_v(g\, dV)=0.
$$
\end{lemma}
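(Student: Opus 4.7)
The strategy is to exploit two features at once: the closedness of $\omega_\X$ (as a Chern curvature form) and the defining property of the horizontal lift (orthogonality to the fibers with respect to $\omega_\X$). Since $g\,dV = \omega_\X^n/n!$ restricted to the fiber $X_s$ (using $\omega_{X_s}=\omega_\X|_{X_s}$), it suffices to prove that $L_v\omega_\X$ pulls back to $0$ on each fiber, because then Leibniz gives $L_v(\omega_\X^n/n!)|_{X_s}=\omega_{X_s}^{n-1}/(n-1)!\wedge (L_v\omega_\X)|_{X_s}=0$.

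First I would invoke Cartan's magic formula: because $\omega_\X=-\sqrt{-1}\,\partial\bar\partial\log h$ is a closed real $(1,1)$-form on $\X$,
$$
L_v\omega_\X=\iota_v(d\omega_\X)+d(\iota_v\omega_\X)=d(\iota_v\omega_\X).
$$
Since $d$ commutes with pullback under the inclusion $X_s\hookrightarrow \X$, what we have to check is that $\iota_v\omega_\X$ restricts to $0$ on $X_s$, i.e.\ that $\omega_\X(v,w)=0$ for every $w\in T_p X_s$.

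Next I would verify this vanishing by splitting $w$ into its $(1,0)$ and $(0,1)$ parts. For $w$ of type $(1,0)$, $\omega_\X(v,w)=0$ is automatic because $v=\partial_s+a_s^\la\partial_\la$ is itself of type $(1,0)$ and $\omega_\X$ is a $(1,1)$-form. For $w=\partial_{\bar\beta}$ of type $(0,1)$ tangent to the fiber, the condition $\omega_\X(v,\partial_{\bar\beta})=0$ is precisely the definition of the horizontal lift $v$ (it is the linear system $g_{s\bar\beta}+a_s^\la g_{\la\bar\beta}=0$ that determines the coefficients $a_s^\la$). Thus $\iota_v\omega_\X|_{X_s}=0$, hence $L_v\omega_\X|_{X_s}=0$, and therefore
$$
L_v(g\,dV)=\tfrac{1}{(n-1)!}\,\omega_\X^{n-1}\we L_v\omega_\X\Big|_{X_s}=0,
$$
as required.

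There is no real obstacle here; the only point that deserves care is making clear in what sense the equality $L_v(g\,dV)=0$ is meant. In the sequel it will be applied under a fiber integral, so the relevant assertion is really that the restriction of $L_v(\omega_\X^n/n!)$ to each fiber vanishes — which is exactly what the two-line Cartan-plus-horizontality argument yields. Away from the fibers $\iota_v\omega_\X$ equals $\sqrt{-1}\,\varphi\,d\bar s$ (in the one-parameter case), so $L_v\omega_\X=\sqrt{-1}\,d\varphi\wedge d\bar s$ on the total space, a form that of course pulls back to zero on $X_s$ but which will be useful bookkeeping in the later computations.
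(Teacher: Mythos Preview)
Your proof is correct and uses the same two ingredients as the paper --- the closedness of $\omega_\X$ and the defining orthogonality of the horizontal lift --- but packages them differently. The paper argues in local coordinates: it computes the $(1,1)$-component of the Lie derivative of the metric tensor directly,
\[
(L_v g_{\la\lb})_{\la\lb}=g_{\la\lb,s}+a_s^\lc g_{\la\lb;\lc}+a_{s;\la}^\lc g_{\lc\lb}=-a_{s\lb;\la}+a_{s;\la}^\lc g_{\lc\lb}=0,
\]
and concludes $L_v(\det g_{\la\lb})=0$. Your route is more geometric: Cartan's formula on the closed form $\omega_\X$ reduces the question to $\iota_v\omega_\X|_{X_s}=0$, which is exactly the horizontal-lift condition. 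Your argument makes transparent \emph{why} the lemma holds (and yields the bonus identification $L_v\omega_\X=\sqrt{-1}\,d\varphi\wedge d\sbar$ on the total space), whereas the paper's coordinate computation dovetails with the explicit index calculus used throughout the rest of the curvature computation. Either is fine here.
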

\begin{proof}
It suffices  to show that the $(1,1)$ component of $L_v(g_{\la\lb})$ vanishes, which implies $L_v(\det(g_{\la\lb}))=0.$ We have
$$
L_v(g_{\la\lb})_{\la\lb} = g_{\la\lb,s} + a_{s}^{\lc}g_{\la\lb;\lc} + a_{s;\la}^{\lc}g_{\lc\lb} = -a_{s\lb;\la} + a_{s;\la}^{\lc}g_{\lc\lb}=0.   
$$
\end{proof}

\subsection{Main part of the computation}
We start computing the curvature by computing the first order variation. Using Lie derivatives, the pointwise inner products can be broken up:
\begin{proposition}
\label{firstordervar}
$$
\frac{\dl}{\dl s} \<\psi^k,\psi^l\> = \<L_v\psi^k,\psi^l\> + \< \psi^k,L_{\vbar}\psi^l\>,
$$
where $\dl/\dl s$ denotes a tangent vector on the base $S$.
\end{proposition}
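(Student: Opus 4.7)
The plan is to push the base derivative inside the fiber integral via Lemma \ref{lemma 1}, then expand the resulting Lie derivative by the Leibniz rule. First, taking the horizontal lift $v=v_s$ of $\dl/\dl s$ as our chosen differentiable lift, Lemma \ref{lemma 1} converts the $s$-derivative of the hermitian metric tensor into a fiber integral of a Lie derivative:
\[
\frac{\dl}{\dl s}\<\psi^k,\psi^l\>
= \frac{\dl}{\dl s}\int_{X_s}(\psi^k\cdot\psi^{\lbar})\,g\,dV
= \int_{X_s}L_v\bigl[(\psi^k\cdot\psi^{\lbar})\,g\,dV\bigr].
\]

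Second, since $L_v$ obeys the Leibniz rule (appendix \ref{LieDer}) and the preceding lemma gives $L_v(g\,dV)=0$ for the horizontal lift, the volume factor is killed and we are reduced to $\int_{X_s}L_v(\psi^k\cdot\psi^{\lbar})\,g\,dV$.

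The key step is to split the Lie derivative of the pointwise inner product. Because $\psi^k\cdot\psi^{\lbar}$ is a scalar function on $X_s$, $L_v$ coincides with the vector field $v$ acting as a derivation. The Chern connections on $(L,h)$ and on $(T_{X_s},\omega_{X_s})$ are each hermitian, hence the induced connection $\nabla$ on $\A^{p,n-p}(X_s,L|_{X_s})$ satisfies the pointwise compatibility
\[
d(\psi^k\cdot\psi^{\lbar}) = (\nabla\psi^k)\cdot\psi^{\lbar} + \psi^k\cdot(\nabla\psi^{\lbar}).
\]
Contracting with $v$ and organizing using Cartan's formula $L_v=\delta_v\nabla+\nabla\delta_v$ (together with the corresponding identity along $\vbar$ acting on the conjugate factor) yields the desired Leibniz-type identity
\[
v(\psi^k\cdot\psi^{\lbar}) = (L_v\psi^k)\cdot\psi^{\lbar} + \psi^k\cdot L_v\psi^{\lbar},
\]
and since conjugation intertwines $L_v\psi^{\lbar}=\ovl{L_{\vbar}\psi^l}$, the second summand is precisely the pointwise hermitian pairing of $\psi^k$ against $L_{\vbar}\psi^l$. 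Integrating over $X_s$ gives the stated formula. (Note that the type-$(p-1,n-p+1)$ part $L_v\psi^{k''}$ and the type-$(p+1,n-p-1)$ part $L_{\vbar}\psi^{l''}$ pair to zero against $\psi^{\lbar}$ and $\psi^k$ respectively, so only the type-preserving components effectively contribute under the integral; this observation is not needed for the statement but will matter in the subsequent second-order computation.)

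The main obstacle is the Leibniz identity for $L_v$ against the hermitian pointwise pairing, since $L_v$ is defined through Cartan's formula rather than as a straight covariant derivative. Making it rigorous amounts to invoking hermitian compatibility of the Chern connection on $\A^{p,n-p}(X_s,L|_{X_s})$ and checking that the $\nabla\delta_v$ term of Cartan's formula is compatible with the pairing of $L$-valued forms of mixed type; all of this is formal once the Lie-derivative formalism from the appendix is in place, and no further geometric input beyond hermiticity and $L_v(g\,dV)=0$ is required.
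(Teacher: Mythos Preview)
Your argument is correct and shares the opening moves with the paper's proof---Lemma~\ref{lemma 1} to push $\partial/\partial s$ inside the fiber integral, then $L_v(g\,dV)=0$ to strip off the volume element---but diverges at the key Leibniz step. The paper establishes the splitting $v(\psi^k\cdot\psi^{\lbar}) = (L_v\psi^k)'\cdot\psi^{\lbar} + \psi^k\cdot\overline{(L_{\bar v}\psi^l)'}$ by a brute-force local-coordinate expansion: it applies $(\partial_s + a_s^\alpha\partial_\alpha)$ to the full scalar $\psi^k_{A_p\bar B_q}\psi^{\lbar}_{C_q\bar D_p}g^{\bar D_pA_p}g^{\bar B_qC_q}h$, uses the identities $\partial_s g^{\bar\beta\gamma}=g^{\bar\beta\sigma}a_{s;\sigma}^\gamma$ and $\partial_\alpha g^{\bar\beta\gamma}=-g^{\bar\beta\sigma}\Gamma_{\alpha\sigma}^\gamma$ together with the Christoffel symbols $\Gamma^h_s,\Gamma^h_\alpha$ of $(L,h)$, and then regroups ordinary derivatives into covariant ones so that the local expressions~(\ref{Lv'}) and~(\ref{Lvbar'}) for the $(p,q)$-components of $L_v\psi^k$ and $L_{\bar v}\psi^l$ emerge explicitly. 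Your route instead invokes hermitian compatibility of the induced Chern connection on $\Lambda^{p,q}T^*\otimes L$ together with the conjugation identity $L_v\psi^{\lbar}=\overline{L_{\bar v}\psi^l}$, which is exactly the content of Proposition~\ref{breakup} in the appendix, specialized to the hermitian pointwise pairing rather than the wedge pairing. This is cleaner and more conceptual, but one point deserves care: the passage from ``hermitian compatibility of $\nabla$'' to ``Leibniz rule for $L_v$'' is not a one-line contraction, because the $\nabla$ in Cartan's formula $L_v=\delta_v\nabla+\nabla\delta_v$ is the \emph{exterior} covariant derivative, not the bundle covariant derivative in direction $v$, and the extra $\nabla\delta_v$ contributions on the two factors must be shown to cancel in the pairing. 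That cancellation is precisely what the proof of Proposition~\ref{breakup} carries out. So your argument is complete once you cite that proposition explicitly, whereas the paper's version is self-contained at the price of a page of index bookkeeping.
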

\begin{proof}
By Lemma \ref{lemma 1} we have ($q=n-p$)
$$
\frac{\dl}{\dl s} \<\psi^k,\psi^l\> = \int_X{L_v\left(
\psi^k_{A_p\Bbar_q}\psi^{\lbar}_{C_q\ovl{D}_p}g^{\ovl{D}_pA_p}g^{\Bbar_{q}C_q}h \;g\,dV
\right)}.
$$
The integrand is now a Lie derivative of an ordinary $(n,n)$-form. We have
$$
L_v\left(
\psi^k_{A_p\Bbar_q}\psi^{\lbar}_{C_q\ovl{D}_p}g^{\ovl{D}_pA_p}g^{\Bbar_{q}C_q}h \;g\,dV
\right) =L_v\left(
\psi^k_{A_p\Bbar_q}\psi^{\lbar}_{C_q\ovl{D}_p}g^{\ovl{D}_pA_p}g^{\Bbar_{q}C_q}h\right)g\,dV,
$$
because $L_v(g\,dV)$ vanishes. Now the Lie derivative of a function is just the ordinary derivative in the
direction of $v$, so we get (by using Einstein's summation convention and , for ordinary derivatives)
\begin{eqnarray*}
L_v\left(
\psi^k_{A_p\Bbar_q}\psi^{\lbar}_{C_q\ovl{D}_p}g^{\ovl{D}_pA_p}g^{\Bbar_{q}C_q}h\right)=
\Big(\dl_s+a_s^{\la}\dl_{\la}\Big)\left(\psi^{\lbar}_{C_q\ovl{D}_p}g^{\ovl{D}_pA_p}g^{\Bbar_{q}C_q}h\right)\\
=\left(\psi_{A_p\Bbar_q,s}+a_s^{\la}\psi_{A_p\Bbar_q,\la}\right)\left(\psi^{\lbar}_{C_q\ovl{D}_p}g^{\ovl{D}_pA_p}g^{\Bbar_{q}C_q}h\right)
+ \psi^k_{A_p\Bbar_q}\left(\psi^{\lbar}_{C_q\Dbar_p,s} + a_s^{\la}\psi^{\lbar}_{C_q\Dbar_p,\la}\right)
\left(g^{\ovl{D}_pA_p}g^{\Bbar_{q}C_q}h\right)\\
+ \psi^k_{A_p\Bbar_q}\psi^{\lbar}_{C_q\Dbar_p}\left(\dl_s(g^{\ld_1\la_1})g^{\ld_2\la_2}\ldots g^{\lb_n\lc_n}h +
g^{\ld_1\la_1}\dl_s(g^{\ld_2\la_2})\ldots g^{\lb_n\lc_n}h + \ldots + g^{\ld_1\la_1}g^{\ld_2\la_2}\ldots \dl_s(g^{\lb_n\lc_n})h
\right)\\
+ \psi^k_{A_p\Bbar_q}\psi^{\lbar}_{C_q\Dbar_p}\left(a_s^{\la}\dl_{\la}(g^{\ld_1\la_1})g^{\ld_2\la_2}\ldots g^{\lb_n\lc_n}h+
g^{\ld_1\la_1}a_s^{\la}\dl_{\la}(g^{\ld_2\la_2})\ldots g^{\lb_n\lc_n}h + \ldots + g^{\ld_1\la_1}g^{\ld_2\la_2}\ldots a_s^{\la}\dl_{\la}(g^{\lb_n\lc_n})h
\right)\\
+ \left(\psi^k_{A_p\Bbar_q}\psi^{\lbar}_{C_q\ovl{D}_p}g^{\ovl{D}_pA_p}g^{\Bbar_{q}C_q}\right) \Big(\dl_sh+a_s^{\la}\dl_{\la}h \Big)
\end{eqnarray*}  
Now we use the identities $\dl_sg^{\lb\lc}=g^{\lb\ls}a_{s;\ls}^{\lc}$ and $\dl_{\la}g^{\lb\lc}=-g^{\lb\ls}\Gamma_{\la\ls}^{\lc}$
as well as the Christoffel symbols for the Chern connection on $(L,h)$ which are $h^{-1}\dl_sh=\Gamma^h_s$
and $h^{-1}\dl_{\la}h=\Gamma^h_{\la}$. The above somewhat lengthy expression can then be written as (now we use ; for indicating covariant derivatives)
\begin{eqnarray*}
(\psi^k_{A_p\Bbar_q,s}+\Gamma^h_s\psi^k_{A_p\Bbar_q} + a_s^{\la}(\psi_{A_p\Bbar_q,\la}^k
-\sum_{j=1}^p{\Gamma^{\ls}_{\la\la_j}\psi^k_{
{\tiny\vtop{
\hbox{$\la_1\ldots\ls\ldots\la_p\Bbar_q$}\vskip-.8mm
\hbox{$\phantom{\la_1\ldots}{|\atop j} $}}}}}
+\Gamma^h_{\la}\psi^k))\psi^{\lbar}_{C_q\Dbar_p}
g^{\ovl{D}_pA_p}g^{\Bbar_{q}C_q}h\\
+ \psi^k_{A_p\Bbar_q}(\psi^{\lbar}_{C_q\Dbar_p,s}+a_s^{\la}(\psi_{C_q\Dbar_p,\la}^{\lbar}
-\sum_{j=p+1}^n{\Gamma^{\ls}_{\la\lc_j}\psi^{\lbar}_{
{\tiny\vtop{
\hbox{$\lc_{p+1}\ldots\ls\ldots\la_n\Dbar_p$}\vskip-.8mm
\hbox{$\phantom{\lc_{p+1}\ldots}{|\atop j} $}}}}}
))g^{\ovl{D}_pA_p}g^{\Bbar_{q}C_q}h\\
+(\sum_{j=1}^p{\psi^k_{
{\tiny\vtop{
\hbox{$\la_1\ldots\la\ldots\la_p\Bbar_q$}\vskip-.8mm
\hbox{$\phantom{\la_1\ldots}{|\atop j} $}}}}}
a_{s;\la_j}^{\la})
\psi^{\lbar}_{C_q\Dbar_p}g^{\ovl{D}_pA_p}g^{\Bbar_{q}C_q}h 
+ \psi^k_{A_p\Bbar_q}(\sum_{j=p+1}^n{\psi^{\lbar}_{
{\tiny\vtop{
\hbox{$\lc_{p+1}\ldots\la\ldots\lc_n\Dbar_p$}\vskip-.8mm
\hbox{$\phantom{\lc_{p+1}\ldots}{|\atop j} $}}}}}
a_{s;\lc_j}^{\la})g^{\ovl{D}_pA_p}g^{\Bbar_{q}C_q}h\\
= (\psi^k_{A_p\Bbar_q:s} + a_s^{\la}\psi^k_{A_p\Bbar_q;\la})\psi^{\lbar}_{C_q\Dbar_p}g^{\ovl{D}_pA_p}g^{\Bbar_{q}C_q}h
+ \psi^k_{A_p\Bbar_q}(\psi^{\lbar}_{C_q\Dbar_p;s}+a_s^{\la}\psi^{\lbar}_{C_q\Dbar_p;\la})g^{\ovl{D}_pA_p}g^{\Bbar_{q}C_q}h\\
+(\sum_{j=1}^p{\psi^k_{
{\tiny\vtop{
\hbox{$\la_1\ldots\la\ldots\la_p\Bbar_q$}\vskip-.8mm
\hbox{$\phantom{\la_1\ldots}{|\atop j} $}}}}}
a_{s;\la_j}^{\la})
\psi^{\lbar}_{C_q\Dbar_p}g^{\ovl{D}_pA_p}g^{\Bbar_{q}C_q}h 
+ \psi^k_{A_p\Bbar_q}(\sum_{j=p+1}^n{\psi^{\lbar}_{
{\tiny\vtop{
\hbox{$\lc_{p+1}\ldots\la\ldots\lc_n\Dbar_p$}\vskip-.8mm
\hbox{$\phantom{\lc_{p+1}\ldots}{|\atop j} $}}}}}
a_{s;\lc_j}^{\la})g^{\ovl{D}_pA_p}g^{\Bbar_{q}C_q}h
\end{eqnarray*}  
The $(p,q)$-components of the forms $L_v\psi$ and $L_{\vbar}\psi^l$ are given by
\begin{eqnarray*}
(L_v\psi^{k})_{(p,q)}  = (\psi^k_{A_p\Bbar_q;s} + a_s^{\la}\psi^k_{A_p\Bbar_q;\la} + \sum_{j=1}^p{a_{s;\la_j}^{\la}
\psi^k_{
{\tiny\vtop{
\hbox{$\la_1\ldots\la\ldots\la_p\Bbar_q$}\vskip-.8mm
\hbox{$\phantom{\la_1\ldots}{|\atop j} $}}}}}
)
dz^{A_p}\wedge dz^{\Bbar_q}
\end{eqnarray*}
and
\begin{eqnarray*}
(L_{\vbar}\psi^l)_{(p,q)}=(\psi^l_{D_p\Cbar_q;\sbar}+a_{\sbar}^{\cbar}\psi^l_{D_p\Cbar_q;\cbar} + \sum_{j=p+1}^n{a_{\sbar;\cbar_j}^{\cbar}\psi^l_{
{\tiny\vtop{
\hbox{$D_p \cbar_{p+1}\ldots\cbar\ldots\cbar_n$}\vskip-.8mm
\hbox{$\phantom{D_p \cbar_{p+1}\ldots}{|\atop j}$}}}}}
)
dz^{D_p}\wedge dz^{\Cbar_q},
\end{eqnarray*}
thus the statement of the proposition follows.
\end{proof}
The above proposition is a main reason for the use of Lie derivatives.
A second justification for using Lie derivatives is given by the following lemma, which allows us to express some components of the Lie derivatives as cup products with the Kodaira-Spencer form or the horizontal lift respectively:  
\begin{lemma}
\begin{eqnarray}
L_v\psi''&=&A_s \cup \psi,\label{id1}\\
L_{\vbar}\psi''&=&(-1)^pA_{\sbar}\cup \psi, \label{id2}\\
L_{\vbar}\psi'&=&(-1)^p\dbar(\vbar \cup \psi). \label{id3}
\end{eqnarray}
\end{lemma}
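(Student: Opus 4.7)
My plan is to verify each of the three identities by a direct local-coordinate comparison, invoking Cartan's formula and the $\dbar$-closedness of $\psi$ on $\X$ whenever a total-space derivative appears.

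For (\ref{id1}) I will compare the coordinate expression (\ref{Lv''}) for $L_v\psi''$ with the definition of $A_s\cup\psi$. Both involve the contraction $A_{s\lb_p}^{\la}\psi_{\ldots\la\ldots}$ together with the insertion of a new differential $dz^{\lb_p}$; the sum over $j$ in (\ref{Lv''}) is absorbed into the alternating convention after reordering the wedge factors and exploiting the skew-symmetry of $\psi$ in its holomorphic slots, together with the implicit $1/p!\,q!$ normalization. An identical matching, now involving $\Abar_{\sbar\la}^{\lb}\psi_{\ldots\lb\ldots}$ and the insertion of a new holomorphic differential $dz^{\la}$ among the antiholomorphic factors, will yield (\ref{id2}); the prescribed sign $(-1)^p$ arises because the canonical form of a $(p+1,n-p-1)$-form requires the new holomorphic differential to sit to the left of all antiholomorphic ones, and reordering it past the existing $p$ holomorphic factors produces exactly this sign.

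For (\ref{id3}) I will apply Cartan's formula $L_{\vbar}\psi = \nabla(\delta_{\vbar}\psi) + \delta_{\vbar}(\nabla\psi)$ and project onto the $(p,n-p)$-component. Since $\vbar = \dl_{\sbar}+a_{\sbar}^{\lb}\dl_{\lb}$ is a $(0,1)$-vector field, $\delta_{\vbar}\psi$ is of type $(p,n-p-1)$, so the $(p,n-p)$-part of $\nabla(\delta_{\vbar}\psi)$ is precisely $\dbar(\delta_{\vbar}\psi)$; on the other hand, the $(p,n-p)$-part of $\delta_{\vbar}(\nabla\psi)$ comes only from $\delta_{\vbar}(\dbar\psi)$, which vanishes by Lemma \ref{lemma 2}. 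The sign $(-1)^p$ then enters through the relation $\delta_{\vbar}\psi = (-1)^p\,\vbar\cup\psi$, reflecting that the interior product of the antiholomorphic vector $\vbar$ with $\psi = \psi_{A_p\Bbar_q}dz^{A_p}\we dz^{\Bbar_q}$ must pass through the $p$ holomorphic differentials before acting on the antiholomorphic factors, whereas the cup-product convention assembles the result without such a sign.

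The main obstacle will be the consistent bookkeeping of these combinatorial signs, in particular the factor $(-1)^p$ in (\ref{id2}) and (\ref{id3}) and the absorption of the sums over $j$ in (\ref{Lv''}) and (\ref{Lvbar''}) into the alternating convention. I also need to check that, when $\vbar\cup\psi$ is computed on $\X$ for the full representative of Lemma \ref{lemma 2} (which may have components involving $d\sbar$), taking $\dbar$ and restricting to the fiber produces exactly the expression (\ref{Lvbar'}); here the $\dbar$-closedness relation (\ref{dbar-closedness}) is used to trade $\dl_{\sbar}$-derivatives of the $(p,q)$-coefficients for $\dbar$-derivatives of the $d\sbar$-component. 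Once these sign conventions and restriction issues are resolved, each identity reduces to a term-by-term comparison.
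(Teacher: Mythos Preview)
Your proposal is correct and follows essentially the same route as the paper. For (\ref{id1}) and (\ref{id2}) both you and the paper compare the explicit coordinate expressions (\ref{Lv''}), (\ref{Lvbar''}) with the definition of the cup product, absorbing the sum over $j$ into the alternating convention; for (\ref{id3}) the paper carries out exactly the coordinate check you sketch in your final paragraph (computing $\vbar\cup\psi$ including the $d\sbar$-component, applying the fiberwise $\dbar$, and matching against (\ref{Lvbar'}) via (\ref{dbar-closedness}) and the fiberwise $\dbar$-closedness of $\psi|_{X_s}$). Your Cartan-formula type argument for (\ref{id3}) is a slightly cleaner packaging of the same computation---the observation that $\delta_{\vbar}(\nabla\psi)$ contributes only in bidegree $(p+1,n-p-1)$ once $\dbar\psi=0$ is exactly what the paper's coordinate manipulations encode---but the underlying mechanism and the key input are identical.
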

\begin{proof}
First, we prove (\ref{id1}):
We have
\begin{eqnarray*}
L_v\psi'' &=& \sum^p_{j=1} A^\la_{s\lb_p}
\psi_{ {\tiny\vtop{ \hbox{$\la_1\ldots\la\ldots\la_p\Bbar_{n-p}$\;}\vskip-.8mm \hbox{$\phantom{\la_1\ldots}{|\atop j} $}}}}
\vtop{\hbox{$dz^{\la_1}\wedge\ldots\wedge dz^{\lb_p}\wedge\ldots\wedge
dz^{\la_p} \we dz^{\lb_{p+1}}\we\ldots\we
dz^{\lb_n}$}\hbox{$\phantom{dz^{\la_1}\we\ldots\we \; }{|\atop j} $}}\\
&=& \sum^p_{j=1}
A^\la_{s\lb_p}\psi_{\la\,\la_1\ldots\la_{p-1} \Bbar_{n-p}} dz^{\lb_p}\we dz^{\la_1}\we
\ldots \we dz^{\la_{p-1}}\we dz^{\lb_{p+1}} \ldots\we dz^{\lb_n}.
\end{eqnarray*}
Similarly, we have
\begin{eqnarray*}
L_{\vbar}\psi'' &=& \sum^n_{j=p+1} A^{\lb}_{\sbar \la_{p+1}}
\psi_{\tiny\vtop{ \hbox{$\la_1\ldots\la_p\lb_{p+1}\ldots\lb\ldots\lb_n\;$}\vskip-.8mm
\hbox{$\phantom{\la_1\ldots\la_p\lb_{p+1}\ldots}{|\atop j}$}}} 
\vtop{\hbox{$dz^{\la_1}\we\ldots\we dz^{\la_p}\we dz^{\lb_{p+1}}\we\ldots\we dz^{\la_{p+1}}\we\ldots
\we dz^{\lb_n} $}
\hbox{$\phantom{dz^{\la_1}\we\ldots\we dz^{\la_p}\we dz^{\la_1}\we\ldots\we dz}{|\atop j}$}}\\
&=& \sum_{j=p+1}^n A_{\sbar \la_{p+1}}^{\lb}\psi_{\la_1\ldots\la_p\lb\lb_{p+2}\ldots\lb_n}\, dz^{\la_1}\we \ldots \we dz^{\la_p}\we dz^{\la_{p+1}}\wedge dz^{\lb_{p+2}}\wedge\ldots\we dz^{\lb_n}.
\end{eqnarray*}
Finally, we prove (\ref{id3}): We have
\begin{eqnarray*}
\vbar \cup \psi &=&\left(\psi_{A_p\sbar\,\lb_{p+1}\ldots\lb_{n-1}} + a_{\sbar}^{\lb}\psi_{A_p\lb\,\lb_{p+1}\ldots\lb_{n-1}} \right)\,dz^{A_p}\wedge dz^{\lb_{p+1}}\wedge \ldots \wedge dz^{\lb_{n-1}}\\
&=&(-1)^{n-p-1}\left(\psi_{A_p\lb_{p+1}\ldots\lb_{n-1}\sbar} + a_{\sbar}^{\lb}\psi_{A_p\lb_{p+1}\ldots\lb_{n-1}\lb} \right)\,dz^{A_p}\wedge dz^{\lb_{p+1}}\wedge \ldots \wedge dz^{\lb_{n-1}}
\end{eqnarray*}
and hence
\begin{eqnarray*}
\dbar(\vbar \cup \psi)&=& (-1)^{n-p-1}\left(
\psi_{A_p\lb_{p+1}\ldots\lb_{n-1}\sbar;\lb_n} + (a_{\sbar}^{\lb}\psi_{A_p\lb_{p+1}\ldots\lb_{n-1}\lb})_{;\lb_{n}}
\right)dz^{\lb_n}\wedge dz^{A_p}\wedge dz^{\lb_{p+1}}\wedge \ldots \wedge dz^{\lb_{n-1}}\\
&=& (-1)^p(-1)^{n-p-1}\left(
\psi_{A_p\lb_{p+1}\ldots\lb_{n-1}\sbar;\lb_n} + (a_{\sbar}^{\lb}\psi_{A_p\lb_{p+1}\ldots\lb_{n-1}\lb})_{;\lb_{n}}
\right)dz^{A_p}\wedge dz^{\lb_n}\wedge dz^{\lb_{p+1}}\wedge \ldots \wedge dz^{\lb_{n-1}}
\end{eqnarray*}
The skew-symmetrized coefficients of $(a_{\sbar}^{\lb}\psi_{A_p\lb_{p+1}\ldots\lb_{n-1}\lb})_{;\lb_{n}}dz^{A_p}\wedge dz^{\lb_n}\wedge dz^{\lb_{p+1}}\wedge \ldots \wedge dz^{\lb_{n-1}}$ are given by
\begin{eqnarray*}
\left[\dbar(a_{\sbar}^{\lb}\psi_{A_p\lb_{p+1}\ldots\lb_{n-1}\lb})\right]_{A_p\lb_{p+1}\ldots\lb_n}&=&
(-1)^p\sum_{j=p+1}^n{(-1)^{j-p-1}\left(a_{\sbar}^{\lb}\psi_{A_p\lb_{p+1}\ldots\Hat{\lb}_j\ldots\lb_n\lb}\right)_{;\lb_j}}\\
&=& (-1)^p\sum_{j=p+1}^n{(-1)^{j-p-1}\left(a_{\sbar;\lb_j}^{\lb}\psi_{A_p\lb_{p+1}\ldots\Hat{\lb}_j\ldots\lb_n\lb} + 
a_{\sbar}^{\lb}\psi_{A_p\lb_{p+1}\ldots\Hat{\lb}_j\ldots\lb_n\lb\,;\lb_j}\right)}
\end{eqnarray*}
Remember that we have
$$
L_{\vbar}\psi' = \left(\psi_{A_p\lb_{p+1}\ldots\lb_n;\sbar} + a_{\sbar}^{\lb}\psi_{A_p\lb_{p+1}\ldots\lb_n;\lb} + \sum_{j=p+1}^n{a_{\sbar;\lb_j}^{\lb}
\psi_{
{\tiny\vtop{
\hbox{$A_p \lb_{p+1}\ldots\lb\ldots\lb_n$}\vskip-.8mm
\hbox{$\phantom{A_p \lb_{p+1}\ldots}{l\atop j}$}}}}}
\right)\, dz^{A_p}\wedge dz^{\Bbar_{n-p}}.
$$
Now the identity (\ref{id3}) follows from the $\dbar$-closedness of $\psi$, that means equation (\ref{dbar-closedness})
and the $\dbar$-closedness of $\psi|_{X_s}$ along the fibers.
\end{proof}
Now we can simplify the expression for the first order variation of the metric tensor. Because of (\ref{id3}) and the harmonicity of 
$\psi^k|_{X_s}$, we have
$$
\< \psi^k,L_{\vbar}\psi^l\> = \< \psi^k,L_{\vbar}(\psi^l)' \>=\<\psi^k,(-1)^p\dbar(\vbar \cup \psi^l)\>=\<\dbar^*(\psi^k),(-1)^p(\vbar \cup \psi^l)\>=0. 
$$
for all $s \in S$. Thus by Lemma \ref{firstordervar}, we have
\begin{eqnarray*}
\frac{\dl}{\dl s} H^{\lbar k} &=& \<L_v\psi^k,\psi^l\> + \<\psi^k,L_{\vbar}\psi^l\>\\
&=& \<L_v\psi^k,\psi^l\>\\
&=& \<L_v(\psi^k)',\psi^l\>.
\end{eqnarray*}
For later computations, we need to compare Laplacians:  
\begin{lemma}
\label{BKN}
We have the following relation on the space $\A^{p,q}(X_s,L|_{X_s})$
\begin{equation}
\laplace - \laplacedbar = (n-p-q)\cdot \id
\end{equation}
In particular, the harmonic forms $\psi \in \A^{p,n-p}(X_s,L|_{X_s})$ are also harmonic with respect to $\dl$, which is the $(1,0)$- part of the hermitian connection on $\A^{p,n-p}(X_s,L|_{X_s})$.
\end{lemma}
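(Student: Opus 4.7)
The plan is to apply the Bochner-Kodaira-Nakano identity fiberwise. On $X_s$ equipped with the Kähler form $\omega_{X_s}$ and the hermitian line bundle $(L,h)|_{X_s}$, this identity reads
\[
\laplacedbar - \laplace = [\sqrt{-1}\Theta(L|_{X_s}), \Lambda_\omega],
\]
the standard zeroth-order discrepancy between the $\dbar$- and $\dl$-Laplacians on $L$-valued forms.

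The crucial input is that, by the very definition $\omega_{\X} = -\sqrt{-1}\dl\dbar\log h = \sqrt{-1}\Theta(L,h)$ combined with the relation $\omega_{X_s} = \omega_{\X}|_{X_s}$, we have the pointwise equality $\sqrt{-1}\Theta(L)|_{X_s} = \omega_{X_s}$ on the fiber, not merely at the level of cohomology classes. Consequently, the right-hand side of the Bochner-Kodaira-Nakano identity reduces to the purely algebraic commutator $[L_{\omega_{X_s}}, \Lambda_{\omega_{X_s}}]$ acting on $L|_{X_s}$-valued differential forms.

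Next I would invoke the well-known identity $[L_\omega, \Lambda_\omega]u = (p+q-n)u$ for $u \in \A^{p,q}(X_s, L|_{X_s})$, e.g.\ \cite[Cor.~VI.5.9]{De12}; the bundle coefficients are inert because the operator is tensorial in the form part. Substituting back gives $\laplacedbar - \laplace = (p+q-n)\id$, hence $\laplace - \laplacedbar = (n-p-q)\id$ as claimed.

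For the second assertion, specialize to $q = n-p$, so that the scalar factor vanishes and the two Laplacians coincide on $\A^{p,n-p}(X_s, L|_{X_s})$. Any $\dbar$-harmonic form in this bidegree is therefore automatically $\dl$-harmonic, in particular $\dl$-closed. There is no real obstacle in this argument: everything hinges on standard Kähler identities, and the substantive content is simply the fiberwise identification of the curvature form of $(L,h)$ with the Kähler form $\omega_{X_s}$, which is already built into the setup.
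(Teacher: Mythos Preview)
Your proof is correct and follows essentially the same approach as the paper: apply the Bochner--Kodaira--Nakano identity, use the built-in identification $\sqrt{-1}\Theta(L)|_{X_s}=\omega_{X_s}$, and invoke the commutator formula $[L_\omega,\Lambda_\omega]=(p+q-n)\id$ from \cite[Cor.~VI.5.9]{De12}. You even spell out the specialization to $q=n-p$ for the second assertion slightly more explicitly than the paper does.
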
 
\begin{proof}
The Bochner-Kodaira-Nakano identity says (on the fiber $X_s$)
$$
\laplacedbar - \laplace = \left[\sqrt{-1}\Theta(L),\Lambda\right].
$$
But by definition, we have $\omega_{X_s}=\sqrt{-1}\Theta(L)|_{X_s}$. Furthermore, it holds (see \cite[Cor.VI.5.9]{De12})
$$
\left[L_{\omega},\Lambda_{\omega}\right]u = (p+q-n)\,u \quad \mbox{for } u \in \A^{p,q}(X_s,L|_{X_s}).
$$
\end{proof}
Next, we start to compute the second order derivative of $H^{\lbar k}$ and begin with
$$
\frac{\dl}{\dl s} H^{\lbar k} = \<L_v \psi^k,\psi^l\>.
$$
Now by using Proposition \ref{LieComm} from the appendix we obtain
\begin{eqnarray*}
\dl_{\sbar}\dl_s \<\psi^k,\psi^l\> &=& \<L_{\vbar}L_v\psi^k,\psi^l\> + \<L_v\psi^k,L_v\psi^l\>\\
&=& \<(L_{[\vbar,v]} + \Theta(L)_{\vbar v})\psi^k, \psi^l\> +  \<L_vL_{\vbar}\psi^k,\psi^l\> + \<L_v\psi^k,L_v\psi^l\>\\
&=& \<(L_{[\vbar,v]} + \Theta(L)_{\vbar v})\psi^k, \psi^l\> + \dl_s\<L_{\vbar}\psi^k,\psi^l\> - \<L_{\vbar}\psi^k,L_{\vbar}\psi^l\> + \<L_v\psi^k,L_v\psi^l\>.
\end{eqnarray*}
Because of $\<L_{\vbar}\psi^k,\psi^l\>\equiv 0$ for all $s\in S$ as we just saw, we get
\begin{equation}
\label{secondorder}
\dl_{\sbar}\dl_s \<\psi^k,\psi^l\> = \<(L_{[\vbar,v]} + \Theta(L)_{\vbar v})\psi^k,\psi^l\> + \<L_v\psi^k,L_v\psi^l\> - \<L_{\vbar}\psi^k,L_{\vbar}\psi^l\> .
\end{equation}
Now we treat each term on the right hand side of (\ref{secondorder}) separately. For the first summand, we have
\begin{lemma}
\begin{equation}
L_{[\vbar,v]} + \Theta(L)_{\vbar v}= [-\varphi^{;\la}\dl_{\la} + \varphi^{;\lb}\dl_{\lb},\rule{0.3cm}{0.4pt}] - \varphi \cdot \id, 
\end{equation}
where the bracket $[w,\rule{0.3cm}{0.4pt}]$ stands for a Lie derivative along the vector field $w$.
\end{lemma}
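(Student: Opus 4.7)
The plan is to verify the stated equality by computing each summand on the left-hand side separately and matching it against the corresponding piece on the right.

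\textbf{First, the curvature contribution.} Because $L$ is a line bundle, $\Theta(L)$ acts on any $L$-valued form by multiplication by a scalar, namely the value of the Chern curvature two-form at the given pair of tangent vectors. The identity $\omega_{\X}=\sqrt{-1}\,\Theta(L,h)$ combined with the direct evaluation
\[
\omega_{\X}(v,\vbar)=\sqrt{-1}\bigl(g_{s\sbar}+g_{s\lb}a_{\sbar}^{\lb}+a_s^{\la}g_{\la\sbar}+a_s^{\la}a_{\sbar}^{\lb}g_{\la\lb}\bigr)=\sqrt{-1}\,\varphi,
\]
which is just the definition of $\varphi=\<v_s,v_s\>_{\omega_{\X}}$, shows that $\Theta(L)_{\vbar v}=-\varphi\cdot\id$. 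This already matches the $-\varphi\cdot\id$ term on the right-hand side and reduces the lemma to the purely differential-geometric statement $[\vbar,v]=-\varphi^{;\la}\dl_{\la}+\varphi^{;\lb}\dl_{\lb}$, since on functions and on line-bundle valued forms the Lie derivative along a vertical vector field coincides with the bracket on the right of the claim.

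\textbf{Second, the bracket identity.} Since $v,\vbar$ are horizontal lifts of the commuting basic vectors $\dl_s,\dl_{\sbar}$ on $S$, the Lie bracket $[\vbar,v]$ projects to zero on the base and is therefore vertical. A routine expansion using $v=\dl_s+a_s^{\la}\dl_{\la}$ and $\vbar=\dl_{\sbar}+a_{\sbar}^{\lb}\dl_{\lb}$ gives
\[
[\vbar,v]=\bigl(a_{s,\sbar}^{\la}+a_{\sbar}^{\lb}a_{s,\lb}^{\la}\bigr)\dl_{\la}-\bigl(a_{\sbar,s}^{\lb}+a_s^{\la}a_{\sbar,\la}^{\lb}\bigr)\dl_{\lb}.
\]
The task is then to identify the $(1,0)$-vertical coefficient with $-\varphi^{;\la}$ and its conjugate with $\varphi^{;\lb}$. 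I will extract this by differentiating the horizontality relations $a_s^{\la}g_{\la\lb}=-g_{s\lb}$ and $a_{\sbar}^{\lb}g_{\la\lb}=-g_{\la\sbar}$ in the appropriate base and fibre directions, and then rewriting the outcome using the explicit formula for $\varphi$ recalled above.

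\textbf{Main obstacle.} The bookkeeping in the second step is the delicate point. One has to clear $a_{s,\sbar}^{\la}$ and $a_{\sbar,s}^{\lb}$ using the derivatives of the horizontality constraints, handle the fibrewise Christoffel symbols when passing between ordinary and covariant derivatives, and invoke the K\"ahler condition $g_{\la\lb,\mu}=g_{\mu\lb,\la}$ to collapse the cross terms. Once an index is raised by the inverse metric, the vertical $(1,0)$-component of $[\vbar,v]$ reads off as $-\varphi^{;\la}$ and its conjugate as $\varphi^{;\lb}$, with the correct signs. No new geometric input beyond the horizontality of $v,\vbar$ and the relation $\omega_{\X}=\sqrt{-1}\,\Theta(L,h)$ is required; the subtlety lies entirely in the signed index chase.
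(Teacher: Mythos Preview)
Your approach is correct and essentially identical to the paper's: both proofs compute $\Theta(L)_{\vbar v}$ directly from $\omega_{\X}=\sqrt{-1}\Theta(L)$ and the definition $\varphi=\langle v,v\rangle_{\omega_{\X}}$, then expand the bracket $[\vbar,v]$ using the horizontality relation $a_s^{\la}=-g^{\lb\la}g_{s\lb}$ and the explicit formula $\varphi=g_{s\sbar}-g_{\la\sbar}g_{s\lb}g^{\lb\la}$ to identify the vertical coefficients. The only difference is the order of presentation (you treat the curvature term first, the paper treats it second), and that the paper actually carries out the differentiation of $a_s^{\la}=-g^{\lb\la}g_{s\lb}$ with respect to $\sbar$ explicitly rather than leaving it as a described index chase.
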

\begin{proof}
We first compute the vector field $[\vbar,v]$: 
\begin{eqnarray*}
[\vbar,v] &=& [\dl_{\sbar} + a_{\sbar}^{\lb}\dl_{\sbar},\dl_s + a_s^{\la}\dl_{\la}] \\
&=& \left(\dl_{\sbar}(a_s^{\la}) + a_{\sbar}^{\lb}a_{a|\lb}^{\la} \right)\dl_{\la} - \left(\dl_s(a_{\sbar}^{\lb}) + a_s^{\la}a_{\sbar|\la}^{\lb} \right)\dl_{\lb}
\end{eqnarray*}
Now we have
\begin{eqnarray*}
\dl_{\sbar}(a_s^{\la}) &=& -\dl_{\sbar}(g^{\lb\la}g_{s\lb}) = g^{\lb\ls}g_{\ls\sbar|\ovl{\lt}}g^{\ovl{\lt}\la}g_{s\lb} - g^{\lb\la}g_{s\lb|s}\\
&=& g^{\lb\ls}a_{\sbar\ls;\ovl{\lt}}g^{\ovl{\lt}\la}a_{s\lb} - g^{\lb\la}g_{s\sbar;\lb}
\end{eqnarray*}
Because of $\varphi = g_{s\sbar} - g_{\la \sbar}g_{s\lb}g^{\lb\la}$ the coefficient of $\dl_{\la}$ is $g^{\lb\la}\varphi_{;\lb}=\varphi^{;\la}$. In the same way the get the coefficient of $\dl_{\lb}$.
Next, we need to compute the contribution of the connection on $L$. 
Because of $\sqrt{-1}[\dl,\dbar]=\sqrt{-1}\Theta(L)=\omega_{\X}$, we have
\begin{eqnarray*}
\Theta(L)_{\vbar v} &=& -\Theta(L)_{v \vbar}\\
&=& -\left(g_{s\sbar} + a_{\sbar}^{\lb}g_{s\lb} + a_s^{\la} g_{\la \sbar} + a_{\sbar}^{\lb}a_s^{\la}g_{\la\lb}\right)\\
&=& -\varphi.
\end{eqnarray*}
\end{proof}
\begin{lemma}
\begin{equation}
\label{term1}
\<(L_{[\vbar,v]}+ \Theta(L)_{\vbar v})\psi^k,\psi^l\> = -\<\varphi \cdot\psi^k,\psi^l\> = -\int_{X_s}{\varphi \;\psi^k\cdot \psi^{\lbar}\; g\,dV}.
\end{equation}
\end{lemma}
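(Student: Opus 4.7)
The plan is to apply the preceding lemma to reduce the left hand side to two pieces and then kill the Lie-derivative piece, leaving only the scalar multiplication by $-\varphi$. Concretely, set $W := -\varphi^{;\la}\dl_{\la} + \varphi^{;\lb}\dl_{\lb}$; the preceding lemma gives $L_{[\vbar,v]} + \Theta(L)_{\vbar v} = L_W - \varphi\cdot\id$, so it suffices to prove
\[
\<L_W\psi^k,\psi^l\> = 0,
\]
after which the identity
\[
\<(L_{[\vbar,v]} + \Theta(L)_{\vbar v})\psi^k,\psi^l\> = -\<\varphi\cdot\psi^k,\psi^l\> = -\int_{X_s}\varphi\,\psi^k\cdot\psi^{\lbar}\,g\,dV
\]
is immediate from the definition of the pointwise inner product.

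To establish the vanishing, I would split $W = W^{(1,0)} + W^{(0,1)}$ and expand each Lie derivative via the Cartan-type definition $L_w = \delta_w\nabla + \nabla\delta_w$ using the bidegree decomposition $\nabla = \dl + \dbar$ of the Chern connection on $L|_{X_s}$-valued forms. Since $\psi^l$ is of pure type $(p,n-p)$, only the $(p,n-p)$-component of $L_W\psi^k$ can pair nontrivially against it. For $W^{(0,1)}$ this component is
\[
\dbar(\delta_{W^{(0,1)}}\psi^k) + \delta_{W^{(0,1)}}(\dbar\psi^k),
\]
the second summand vanishes by $\dbar$-closedness of $\psi^k$ on the fiber, and pairing the first against $\psi^l$ yields $\<\delta_{W^{(0,1)}}\psi^k,\dbar^*\psi^l\>=0$ by harmonicity of $\psi^l$. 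The analogous computation for $W^{(1,0)}$ produces the $(p,n-p)$ component $\dl(\delta_{W^{(1,0)}}\psi^k) + \delta_{W^{(1,0)}}(\dl\psi^k)$; here I would invoke Lemma \ref{BKN}, which tells us that $\psi^k$ and $\psi^l$ are also harmonic with respect to $\Box_\partial$, hence $\dl\psi^k = 0$ and $\dl^*\psi^l = 0$, and both summands drop out after pairing. The residual components of $L_W\psi^k$ are of type $(p\pm 1,n-p\mp 1)$ and are orthogonal to $\psi^l$ for bidegree reasons.

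The main obstacle I anticipate is purely bookkeeping: one must verify carefully that the Chern-connection definition of the Lie derivative on line-bundle valued forms decomposes by bidegree in exactly the Cartan-style way used above, and that the correct adjoints are $\dl^*$ and $\dbar^*$ on the compact fiber. The conceptual content is small and comes down to the single observation that $\dl$-harmonicity of the representatives (supplied by Lemma \ref{BKN}) is precisely what is needed to annihilate the $(1,0)$-piece of $L_W$, while ordinary $\dbar$-harmonicity handles the $(0,1)$-piece.
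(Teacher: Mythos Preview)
Your proposal is correct and follows essentially the same route as the paper: both invoke the preceding lemma to reduce to showing $\langle L_W\psi^k,\psi^l\rangle=0$ for $W=-\varphi^{;\la}\dl_\la+\varphi^{;\lb}\dl_\lb$, and both kill the two pieces using $\dl$- and $\dbar$-harmonicity of the representatives (the paper via an explicit local-coordinate computation showing the $(p,n-p)$ component of $L_{\varphi^{;\la}\dl_\la}\psi^k$ equals $\dl(\varphi^{;\la}\dl_\la\cup\psi^k)$, you via the Cartan decomposition $\delta_{W^{(1,0)}}\dl\psi^k+\dl\delta_{W^{(1,0)}}\psi^k$ directly). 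Your abstract Cartan-formula argument is the cleaner packaging of the same idea; the bookkeeping concern you flag is harmless since the bidegree splitting of $L_w=\delta_w\nabla+\nabla\delta_w$ follows immediately from $\nabla=\dl+\dbar$ and the fact that contraction with a $(1,0)$ (resp.\ $(0,1)$) vector field lowers the holomorphic (resp.\ antiholomorphic) degree by one.
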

\begin{proof}
The $\dl$-closedness of $\psi^k$ means that
$$
\psi^k_{;\la} = \sum_{j=1}^p
\psi^k_{
{\tiny\vtop{
\hbox{$\la_1 \ldots \la \ldots \la_p\Bbar_{n-p};\la_j\;$}\vskip-.8mm
\hbox{$\phantom{\la_1\ldots}{|\atop j} $}}}}.
$$
Thus
\begin{eqnarray*}
[\varphi^{;\la}\dl_{\la},\psi^k_{A_p\Bbar_{n-p}}]' &=& \varphi^{;\la}\psi^k_{;\la} + 
\sum_{j=1}^p
\varphi^{;\la}_{;\la_j}\psi^k_{
{\tiny\vtop{
\hbox{$\la_1 \ldots \la \ldots \la_p\Bbar_{n-p}\;$}\vskip-.8mm
\hbox{$\phantom{\la_1\ldots}{|\atop j} $}}}}\\
&=& \sum_{j=1}^p (
\varphi^{;\la}\psi^k_{
{\tiny\vtop{
\hbox{$\la_1 \ldots \la \ldots \la_p\Bbar_{n-p}\;$}\vskip-.8mm
\hbox{$\phantom{\la_1\ldots}{|\atop j} $}}}}
)_{;\la_j}\\
&=& \dl \left( \varphi^{;\la}\dl_{\la}\cup \psi^k\right).
\end{eqnarray*}
This leads to
\begin{eqnarray*}
\<[\varphi^{;\la}\dl_{\la},\psi^k_{A_p\Bbar_{n-p}}],\psi^l\> &=& \<[\varphi^{;\la}\dl_{\la},\psi^k_{A_p\Bbar_{n-p}}]',\psi^l\>\\
&=& \<\dl\left(\varphi^{;\la}\dl_{\la}\cup \psi^k\right),\psi^l\> = \<\varphi^{;\la}\dl_{\la}\cup \psi^k, \dl^*\psi^l\> =0.
\end{eqnarray*}
In the same way we get
$$
\<[\varphi^{;\lb}\dl_{\lb},\psi^k_{A_p\Bbar_{n-p}}],\psi^l\> =0.
$$
\end{proof}
The following proposition contains important identities that allow to obtain an intrinsic expression for the curvature: 
\begin{proposition}
\label{basicid}
\begin{eqnarray}
\dbar(L_v\psi^k)'&=&\dl(A_s\cup\psi^k), \label{eq:1}\\
\dbar^*(L_v\psi^k)'&=& 0, \label{eq:2}\\
\dl^*(A_s \cup \psi^k)&=&0, \label{eq:3}\\
(-1)^p \dbar^*(L_{\vbar}\psi^k)'&=&\dl^*(A_{\sbar}\cup\psi^k), \label{eq:4}\\
\dbar(L_{\vbar}\psi^k)'&=&0, \label{eq:5}\\
\dl(A_{\sbar}\cup\psi^k)&=&0. \label{eq:6}
\end{eqnarray}
\end{proposition}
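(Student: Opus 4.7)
The six identities split naturally into three pairs: the $L_v$--assertions $(\ref{eq:1})$--$(\ref{eq:3})$ and the $L_{\vbar}$--assertions $(\ref{eq:4})$--$(\ref{eq:6})$. The common strategy is a direct computation in local coordinates using the explicit expressions $(\ref{Lv'})$--$(\ref{Lvbar''})$ for the Lie derivative components together with the identities $(\ref{id1})$--$(\ref{id3})$ from the preceding lemma. The essential inputs I would rely on are: (i) the $\dbar$-harmonicity of $\psi^k$ on each fiber, which by Lemma \ref{BKN} also implies $\dl$-harmonicity, so that $\dbar\psi^k|_{X_s}=0$, $\dbar^*\psi^k|_{X_s}=0$, $\dl\psi^k|_{X_s}=0$, $\dl^*\psi^k|_{X_s}=0$; (ii) the total-space $\dbar$-closedness of $\psi^k$, expressed by $(\ref{dbar-closedness})$; and (iii) the symmetry $A_{s\lb\ld}=A_{s\ld\lb}$ of the Kodaira-Spencer tensor together with the fact that $A_s$ is $\dbar$-closed along the fibers.

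For $(\ref{eq:1})$ the plan is to exploit the total-space $\dbar$-closedness: since $\dbar\psi^k=0$ on $\X$, a Cartan-type commutator computation $[\dbar,L_v]\psi^k$ produces a term carrying the obstruction $\dbar v=A_s$. Decomposing $L_v\psi^k=(L_v\psi^k)'+A_s\cup\psi^k$ via $(\ref{id1})$ and extracting the $(p,n-p+1)$-component from the total $\dbar(L_v\psi^k)$, I expect to recover the identity after converting the mixed $\sbar$-derivatives of $\psi^k$ using $(\ref{dbar-closedness})$. Identity $(\ref{eq:5})$ is immediate from $(\ref{id3})$: applying $\dbar$ to $(L_{\vbar}\psi^k)'=(-1)^p\dbar(\vbar\cup\psi^k)$ yields zero by $\dbar^2=0$.

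For $(\ref{eq:2})$ and $(\ref{eq:4})$, the arguments hinge on $\dbar$-harmonicity. For $(\ref{eq:2})$ one writes out $(L_v\psi^k)'$ from $(\ref{Lv'})$, applies the coordinate expression for $\dbar^*$, and observes that the leading piece vanishes by $\dbar^*\psi^k=0$ while the remaining terms involving the $a_{s;\la_j}^{\la}$ cancel upon using the Kodaira-Spencer symmetry and the antisymmetry of the form indices. Identity $(\ref{eq:4})$ is the $\vbar$-counterpart and expresses the compatibility between $\dbar^*(L_{\vbar}\psi^k)'$ and $\dl^*(A_{\sbar}\cup\psi^k)$; it follows analogously, once more using $(\ref{dbar-closedness})$ to convert between fiber- and base-derivatives of $\psi^k$.

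For $(\ref{eq:3})$ and $(\ref{eq:6})$ the vanishing comes from $\dl$-harmonicity of $\psi^k$ combined with the symmetries of $A_s$, $A_{\sbar}$. Computing $\dl^*(A_s\cup\psi^k)$ in coordinates produces a sum of terms in which $\psi^k_{;\mu}$ is contracted with $A_{s\lb_p}^{\la}$; the identity $A_{s\lb\ld}=A_{s\ld\lb}$ symmetrizes the contraction on a pair of indices that are antisymmetric in $\psi^k$, forcing cancellation, and the remaining term is killed by $\dl\psi^k=0$. For $(\ref{eq:6})$ one expands $\dl(A_{\sbar}\cup\psi^k)$ and uses that $A_{\sbar}$ is $\dbar$-closed along the fibers (being the conjugate of the Kodaira-Spencer representative) together with $\dl\psi^k=0$. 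The principal obstacle I anticipate is the combinatorial bookkeeping of antisymmetrization over the $(p,q)$-indices and the sign conventions induced by swapping holomorphic and antiholomorphic form slots in the various cup products; this is best handled by working throughout with skew-symmetric coefficients and carefully tracking which index is displaced by each cup product or covariant derivative.
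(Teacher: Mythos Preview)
Your outline is correct for $(\ref{eq:5})$ and $(\ref{eq:6})$, and for $(\ref{eq:3})$ you have the right ingredients (though it is $\dl^*\psi^k=0$, not $\dl\psi^k=0$, that kills the $\psi$-derivative term, and you also need the fiberwise $\dbar$-closedness of $A_s$ to convert $A^{\la}_{s\lb_{n+1};\ld}$ into $(A_s^{\la\la_p})_{;\lb_{n+1}}$ before the symmetry argument applies).

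The genuine gap is in $(\ref{eq:1})$, $(\ref{eq:2})$, and $(\ref{eq:4})$: you treat them as essentially combinatorial, but the actual computation requires commuting covariant derivatives, and these commutators produce curvature terms that must be accounted for. Concretely, for $(\ref{eq:2})$ your claim that ``the leading piece vanishes by $\dbar^*\psi^k=0$'' is not correct as stated. The leading piece of $(L_v\psi^k)'$ is $\psi_{;s}$, and applying $\dbar^*$ gives $g^{\lb_n\lc}\psi_{;s\lc}$. This is \emph{not} $\dl_s$ of $g^{\lb_n\lc}\psi_{;\lc}=0$, because both the metric $g^{\lb_n\lc}$ and the fiberwise connection (through its Christoffel symbols) depend on $s$. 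The paper handles this by first proving the auxiliary identity $\dl_s(\Gamma^{\ls}_{\la\lc})=-a^{\ls}_{s;\la\lc}$, which yields $\psi_{;\lc s}=\psi_{;s\lc}+\sum_j a^{\ls}_{s;\la_j\lc}\psi_{\la_1\ldots\ls\ldots\la_p\Bbar_{n-p}}$; differentiating $g^{\lb_n\lc}\psi_{;\lc}=0$ in the $s$-direction and using $\dl_s g^{\lb_n\lc}=g^{\lb_n\ls}a^{\lc}_{s;\ls}$ then produces exactly the counterterms that cancel against the contributions from $a_s^{\la}\psi_{;\la}$ and $\sum_j a^{\la}_{s;\la_j}\psi_{\ldots}$. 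None of this is captured by ``Kodaira--Spencer symmetry and antisymmetry of the form indices.''

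Similarly, $(\ref{eq:1})$ and $(\ref{eq:4})$ both require Ricci-type identities such as $\psi_{;s\lb_{n+1}}=\psi_{;\lb_{n+1}s}-g_{s\lb_{n+1}}\psi$ and $\psi_{;\la\lb_{n+1}}=\psi_{;\lb_{n+1}\la}-g_{\la\lb_{n+1}}\psi-\sum R^{\ls}_{\la_j\la\lb_{n+1}}\psi_{\ldots}-\sum R^{\ovl\lt}_{\lb_j\la\lb_{n+1}}\psi_{\ldots}$, where the curvature of both $L$ (the $g_{\cdot\,\cdot}$ terms, since $\omega_{\X}=\sqrt{-1}\Theta(L)$) and of $T_{X_s}$ enter. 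These curvature contributions cancel in a nontrivial way against other terms; your proposal does not mention them at all. The ``principal obstacle'' is not the antisymmetrization bookkeeping but rather tracking these commutator terms.
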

The proof is given in the appendix. 
Now we look at the second term in (\ref{secondorder}) and decompose it into its two types:
\begin{eqnarray*}
\<L_v\psi^k,L_v\psi^l\> &=& \<(L_v\psi^k)',(L_v\psi^l)'\> - \<(L_v\psi^k)'',(L_v\psi^l)''\>\\
&=&  \<(L_v\psi^k)',(L_v\psi^l)'\> - \<A_s \cup \psi^k, A_s \cup \psi^l\>
\end{eqnarray*}
because of (\ref{id1}). At this point, one might wonder about the minus sign. The reason for this is as follows: We have
\begin{eqnarray*}
\dl_{\sbar}\dl_s\<\psi^k,\psi^l\> &=& \dl_{\sbar} \<(L_v\psi^k)',\psi^l\> \\
&=& \<L_{\vbar}(L_v\psi^k)',\psi^l\> + \<(L_v\psi^k)',(L_v\psi^l)'\>\\
&=& \<L_{\vbar}(L_v\psi^k),\psi^l\> - \<(L_{\vbar}(L_v\psi^k)'')'',\psi^l\> + \<(L_v\psi^k)',(L_v\psi^l)'\>\\
&=& \<L_{\vbar}(L_v\psi^k),\psi^l\> + \<(L_v\psi^k)',(L_v\psi^l)'\> - \<(L_v\psi^k)'',(L_v\psi^l)''\>,
\end{eqnarray*}
where for the last line we used the following lemma
\begin{lemma}
$$
\<(L_{\vbar}(L_v\psi^k)'')'',\psi^l\> = \<(L_v\psi^k)'',(L_v\psi^l)''\>
$$
\end{lemma}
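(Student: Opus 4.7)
The plan is to use the two identities (\ref{id1}) and (\ref{id2}) to rewrite both pairings in terms of cup products by $A_s$ and $A_{\sbar}$, thereby reducing the claim to a purely algebraic pointwise adjointness between the operators $A_s\cup$ and $A_{\sbar}\cup$.

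First I would substitute $(L_v\psi^k)'' = A_s\cup\psi^k$ and $(L_v\psi^l)'' = A_s\cup\psi^l$, using (\ref{id1}); both are of bidegree $(p-1,n-p+1)$. Since $(L_v\psi^k)''$ has bidegree $(p-1,n-p+1)$, applying (\ref{id2}) to it with $p$ replaced by $p-1$ gives the $(p,n-p)$-component
$$
(L_{\vbar}(L_v\psi^k)'')'' = (-1)^{p-1}\,A_{\sbar}\cup(A_s\cup\psi^k).
$$
Consequently the asserted identity is equivalent to
$$
(-1)^{p-1}\<A_{\sbar}\cup(A_s\cup\psi^k),\psi^l\> \;=\; \<A_s\cup\psi^k, A_s\cup\psi^l\>,
$$
which in turn follows from the pointwise adjoint relation
$$
\<A_{\sbar}\cup\xi,\eta\> \;=\; (-1)^{p-1}\<\xi, A_s\cup\eta\>
$$
for $\xi\in\A^{p-1,n-p+1}(X_s,L|_{X_s})$ and $\eta\in\A^{p,n-p}(X_s,L|_{X_s})$, upon setting $\xi = A_s\cup\psi^k$ and $\eta = \psi^l$.

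The adjointness statement I would verify by a direct computation in local coordinates: using $A_{\sbar\la}^{\lb} = \overline{A_{s\lb}^{\la}}$ together with the metrics $g_{\la\lb}$ and $h$, the two integrands reduce to the same contraction of the coefficients of $\xi$, $\eta$ and $A_s$. The sign $(-1)^{p-1}$ arises precisely from commuting the extra $dz^{\lb}$ introduced by the cup product past the $p-1$ holomorphic differentials present in $\eta$, and the antisymmetrization factors are handled using the symmetry $A_{s\lb\,\ld} = A_{s\ld\,\lb}$ established in the corollary of Section 2.

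The main obstacle is purely combinatorial: carefully tracking the skew-symmetrization factors and the sign produced by the reordering of the differentials. Once the adjoint identity has been checked, the lemma drops out by the two substitutions described above.
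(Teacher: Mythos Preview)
Your proposal is correct and arrives at the same identity by the same mechanism as the paper, but organized slightly differently. The paper writes out both pointwise inner products directly from the local expressions (\ref{Lv''}) and (\ref{Lvbar''}) and then matches them by an index manipulation that uses the symmetry $A_{\sbar}^{\lb\,\ovl{\lc}}=A_{\sbar}^{\ovl{\lc}\,\lb}$ (equivalently $A_{s\lb\,\ld}=A_{s\ld\,\lb}$). You instead first invoke (\ref{id1}) and (\ref{id2}) to rewrite everything as cup products, and then isolate the computation as a general pointwise adjointness relation
\[
\<A_{\sbar}\cup\xi,\eta\> \;=\; (-1)^{p-1}\<\xi,\,A_s\cup\eta\>,\qquad \xi\in\A^{p-1,n-p+1},\ \eta\in\A^{p,n-p},
\]
whose verification in coordinates is exactly the paper's index manipulation (including the use of the symmetry of $A_s$). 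So the underlying work is identical; your packaging has the advantage that the adjointness statement is reusable and makes the role of the sign $(-1)^{p-1}$ transparent, while the paper's version keeps everything concrete without introducing an auxiliary lemma. One small wording issue: the $(-1)^{p-1}$ comes from commuting $dz^{\lb}$ past the $p-1$ holomorphic differentials \emph{remaining after contraction}, not ``present in $\eta$'' (which has $p$ of them); the argument itself is fine.
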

\begin{proof}
Because of $(\ref{Lv''})$ we have for the pointwise inner product of $(p-1,n-p+1)$-forms
$$
(L_v\psi^k)''\cdot(L_v\psi^l)'' = A_{s\lb_p}^{\la}\psi^k_{\la\la_1\ldots \la_{p-1}\lb_{p+1}\ldots \lb_n}
A_{\sbar\delta_p}^{\ovl{\lc}}\psi^{\lbar}_{\ovl{\lc}\,\ovl{\lc}_1\ldots \ovl{\lc}_{p-1}\delta_{p+1}\ldots \delta_n}\; 
g^{\ovl{\lc}_1\la_1}\ldots g^{\ovl{\lc}_{p-1}\la_{p-1}}g^{\lb_p\delta_p} g^{\lb_{p+1}\delta_{p+1}}\ldots g^{\lb_n\delta_n}
$$
On the other hand, we get by using (\ref{Lv''}) and (\ref{Lvbar''}) for the pointwise inner product of $(p,n-p)$-forms
$$
(L_{\vbar}(L_v\psi^k)'')''\cdot \psi^l = (-1)^{p-1}A_{s\lb}^{\la}\psi^k_{\la\la_1\ldots \la_{p-1}\lb_{p+1}\ldots \lb_n}
A_{\sbar \la_p}^{\lb}\psi^{\lbar}_{\ovl{\lc}_1\ldots \ovl{\lc}_p\delta_{p+1}\ldots \delta_n}\;
g^{\ovl{\lc}_1\la_1}\ldots g^{\ovl{\lc}_p\la_p}g^{\lb_{p+1}\delta_{p+1}}\ldots g^{\lb_n\delta_n}
$$
Now we take the term $A_{\sbar \la_p}^{\lb}\psi^{\lbar}_{\ovl{\lc}_1\ldots \ovl{\lc}_p\delta_{p+1}\ldots \delta_n}\;g^{\ovl{\lc}_p\la_p}$ and rewrite it as
\begin{eqnarray*}
A_{\sbar \la_p}^{\lb}\psi^{\lbar}_{\ovl{\lc}_1\ldots \ovl{\lc}_p\delta_{p+1}\ldots \delta_n}\;g^{\ovl{\lc}_p\la_p} &=&
(-1)^{p-1}A_{\sbar \la_p}^{\lb}\psi^{\lbar}_{\ovl{\lc}_p\ovl{\lc}_1\ldots \ovl{\lc}_{p-1}\delta_{p+1}\ldots \delta_n}\;g^{\ovl{\lc}_p\la_p}\\
&=& (-1)^{p-1}A_{\sbar \la_p}^{\lb}\psi^{\lbar}_{\ovl{\lc}\,\ovl{\lc}_1\ldots \ovl{\lc}_{p-1}\delta_{p+1}\ldots \delta_n}\;g^{\ovl{\lc}\la_p}\\
&=& (-1)^{p-1}A_{\sbar}^{\lb\ovl{\lc}}\psi^{\lbar}_{\ovl{\lc}\,\ovl{\lc}_1\ldots \ovl{\lc}_{p-1}\delta_{p+1}\ldots \delta_n}\\
&=& (-1)^{p-1}A_{\sbar}^{\ovl{\lc}\lb}\psi^{\lbar}_{\ovl{\lc}\,\ovl{\lc}_1\ldots \ovl{\lc}_{p-1}\delta_{p+1}\ldots \delta_n}\\
&=& (-1)^{p-1}A_{\sbar \delta_p}^{\ovl{\lc}}\psi^{\lbar}_{\ovl{\lc}\,\ovl{\lc}_1\ldots \ovl{\lc}_{p-1}\delta_{p+1}\ldots \delta_n}\;g^{\lb\delta_p}\\
\end{eqnarray*}
Thus both expressions coincide.

\end{proof}
Now let $G_{\dl}$ and $G_{\dbar}$ the Green's operators on the spaces $\A^{p,q}(X_s,L|_{X_s})$ with respect to $\laplace$ and $\laplacedbar$ respectively. According to Lemma \ref{BKN} they coincide for $p+q=n$. Now we use normal coordinates (of the second kind) at a given point $s_0 \in S$. The condition $(\dl/\dl s)H^{\lbar k}|_{s_0}=0$ for all $k,l$ means that for $s=s_0$ the harmonic projection
$$
H((L_v\psi^k)')=0
$$     
vanishes for all $k$. Thus, using the identity $\id = H + G_{\dbar}\laplacedbar$ we can write
$$
(L_v\psi^k)'=G_{\dbar}\laplacedbar(L_v\psi^k)' = G_{\dbar}\dbar^*\dbar(L_v\psi^k)' = \dbar^*G_{\dbar}\dl(A_s\cup \psi^k)
$$
by (\ref{eq:2}) and (\ref{eq:1}). Because the form $\dbar(L_v\psi^k)'=\dl(A_s \cup \psi^k)$ is of type $(p,n-p+1)$, we have 
$G_{\dbar}=(\laplace + 1)^{-1}$ on such forms by Lemma \ref{BKN}. We proceed by
\begin{eqnarray*}
\<(L_v\psi^k)',(L_v\psi^l)'\> &=& \<\dbar^*G_{\dbar}\dl(A_s \cup \psi^k),(L_v\psi^l)'\>\\
&=&  \<G_{\dbar}\dl(A_s \cup \psi^k),\dl(A_s \cup \psi^l)\>\\
&=&  \<(\laplace + 1)^{-1}\dl(A_s \cup \psi^k),\dl(A_s \cup \psi^l)\>\\
&=&  \<\dl^*(\laplace + 1)^{-1}\dl(A_s \cup \psi^k),A_s \cup \psi^l\>.
\end{eqnarray*}
Now using (\ref{eq:3}) gives
\begin{eqnarray*}
\<(L_v\psi^k)',(L_v\psi^l)'\> &=& \<(\laplace + 1)^{-1}\laplace(A_s \cup \psi^k),A_s \cup \psi^l\>\\
&=& \<(\laplace + 1)^{-1}(\laplace + 1 - 1) (A_s \cup \psi^k),A_s \cup \psi^l\>\\
&=& \<A_s \cup \psi^k,A_s \cup \psi^l\> - \<(\laplace + 1)^{-1}(A_s \cup \psi^k),A_s \cup \psi^l\>.
\end{eqnarray*}
Altogether, we have
\begin{lemma}
\begin{equation}
\label{term2}
\<L_v\psi^k,L_v\psi^l\> = - \int_{X_s}{(\Box + 1)^{-1}(A_s \cup \psi^k)\cdot (A_{\sbar} \cup \psi^{\lbar})\, g\, dV}
\end{equation}
(We write $\Box=\laplace=\laplacedbar$ when applied to $(p-1,n-p+1)$-forms.)
\end{lemma}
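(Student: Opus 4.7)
The plan is to assemble the pieces that have just been laid out into a single clean identity. First, I would decompose $L_v\psi^k$ into its type components $(L_v\psi^k)' + (L_v\psi^k)''$ of bidegrees $(p,n-p)$ and $(p-1,n-p+1)$ respectively, and similarly for $L_v\psi^l$. The preceding computation of $\dl_{\sbar}\dl_s\<\psi^k,\psi^l\>$ shows that these two components enter the pointwise inner product with opposite signs (a fact derived from the preceding auxiliary lemma), giving
\[
\<L_v\psi^k,L_v\psi^l\> = \<(L_v\psi^k)',(L_v\psi^l)'\> - \<(L_v\psi^k)'',(L_v\psi^l)''\>.
\]
The second term is immediately identified by (\ref{id1}) with $\<A_s \cup \psi^k, A_s \cup \psi^l\>$.

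Next, I would treat the $(p,n-p)$ piece. Working in normal coordinates of the second kind at the base point $s_0$, the condition $(\dl/\dl s)H^{\lbar k}\big|_{s_0}=0$ forces $H((L_v\psi^k)')=0$, so the Hodge decomposition $\id = H + G_{\dbar}\laplacedbar$ gives $(L_v\psi^k)' = G_{\dbar}\dbar^*\dbar\,(L_v\psi^k)'$. Applying (\ref{eq:2}) and (\ref{eq:1}), this equals $\dbar^* G_{\dbar}\,\dl(A_s\cup\psi^k)$. Since $\dl(A_s\cup\psi^k)$ is of type $(p,n-p+1)$, Lemma \ref{BKN} allows me to replace $G_{\dbar}$ by $(\Box+1)^{-1}$ on this form, where $\Box$ acts on $(p-1,n-p+1)$-forms.

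Pairing with $(L_v\psi^l)'$ and moving $\dbar^*$ across by adjunction yields
\[
\<(L_v\psi^k)',(L_v\psi^l)'\> = \<\dl^*(\Box+1)^{-1}\dl(A_s\cup\psi^k),\,A_s\cup\psi^l\>.
\]
Using (\ref{eq:3}) so that $\dl^*(A_s\cup\psi^k)=0$ and the commutation of $(\Box+1)^{-1}$ with $\dl^*$ on its eigenspaces, this becomes $\<(\Box+1)^{-1}\Box(A_s\cup\psi^k),\,A_s\cup\psi^l\>$. Writing $\Box = (\Box+1) - 1$ splits this into $\<A_s\cup\psi^k,\,A_s\cup\psi^l\> - \<(\Box+1)^{-1}(A_s\cup\psi^k),\,A_s\cup\psi^l\>$.

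Finally, combining the two type contributions, the $\<A_s\cup\psi^k,\,A_s\cup\psi^l\>$ terms cancel with opposite signs and only $-\<(\Box+1)^{-1}(A_s\cup\psi^k),\,A_s\cup\psi^l\>$ survives, which is precisely the integral on the right hand side. The only step that requires genuine care is the bookkeeping of the sign coming from the type decomposition and verifying the eigenvalue condition that makes $(\Box+1)^{-1}$ well-defined on the image of $\dl$ applied to a cup product — everything else is formal manipulation using the identities of Proposition \ref{basicid} and Lemma \ref{BKN}.
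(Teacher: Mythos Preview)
Your proof is correct and follows essentially the same route as the paper: the type decomposition with the minus sign justified by the preceding auxiliary lemma, the use of normal coordinates to kill the harmonic projection, the chain $(L_v\psi^k)'=\dbar^*G_{\dbar}\dl(A_s\cup\psi^k)$ via (\ref{eq:2}) and (\ref{eq:1}), the identification $G_{\dbar}=(\laplace+1)^{-1}$ on $(p,n-p+1)$-forms from Lemma~\ref{BKN}, the adjunction steps together with (\ref{eq:3}), and the algebraic trick $\Box=(\Box+1)-1$ are exactly what the paper does. The only cosmetic difference is that the paper writes out the intermediate pairing $\<(\laplace+1)^{-1}\dl(A_s\cup\psi^k),\dl(A_s\cup\psi^l)\>$ explicitly before moving $\dl^*$ across, whereas you compress these two adjunctions into one line.
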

Finally, we look at the third term in (\ref{secondorder}) and decompose it into its two types:
\begin{eqnarray*}
\<L_{\vbar}\psi^k,L_{\vbar}\psi^l\> &=& \<(L_{\vbar}\psi^k)',(L_{\vbar}\psi^l)'\> - \<(L_{\vbar}\psi^k)'',(L_{\vbar}\psi^l)''\>\\
&=&  \<(L_{\vbar}\psi^k)',(L_{\vbar}\psi^l)'\> - \<A_{\sbar} \cup \psi^k, A_{\sbar} \cup \psi^l\>,
\end{eqnarray*}
where we used (\ref{id2}). The identity (\ref{id3}) implies that the harmonic projections of $(L_{\vbar}\psi^k)'$ vanish. Hence we can write
\begin{eqnarray*}
\<(L_{\vbar}\psi^k)',(L_{\vbar}\psi^l)'\> &=& \<G_{\dbar}\laplacedbar (L_{\vbar}\psi^k)',(L_{\vbar}\psi^l)'\>\\
&\stackrel{(\ref{eq:5})}{=}& \<(G_{\dbar}\dbar\,\dbar^*L_{\vbar}\psi^k)',(L_{\vbar}\psi^l)'\>\\
&=& \<G_{\dbar}\dbar^*(L_{\vbar}\psi^k)',\dbar^*(L_{\vbar}\psi^l)'\>\\
&\stackrel{(\ref{eq:4})}{=}& \<G_{\dbar}\dl^*(A_{\sbar} \cup \psi^k),\dl^*(A_{\sbar} \cup \psi^l)\>.
\end{eqnarray*}
Now we have
\begin{lemma}
Let $\sum_{\nu}\rho_{\nu}$ be the eigenfunction decomposition of $A_{\sbar} \cup \psi^k$. Then all the eigenvalues 
$\lambda_{\nu} > 1$ or $\lambda_0=0$. In particular $(\Box - 1)^{-1}(A_{\sbar} \cup \psi^k)$ exists. 
\end{lemma}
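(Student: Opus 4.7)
The plan is to exploit the $\dl$-closedness of $u := A_{\sbar} \cup \psi^k$ provided by (\ref{eq:6}) together with the Bochner-Kodaira-Nakano identity (Lemma \ref{BKN}) to transport the spectrum of $\Box$ on $(p+1,n-p-1)$-forms down to $(p,n-p-1)$-forms, where non-negativity of $\laplacedbar$ delivers the desired bound. Note first that on bidegree $(p+1,n-p-1)$ one has $n-(p+1)-(n-p-1)=0$, so by Lemma \ref{BKN} the operator $\laplace$ agrees with $\laplacedbar$, and I will call this common operator $\Box$.

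Step 1 (propagate $\dl$-closedness to each eigencomponent). Because $\dl^2 = 0$, a quick calculation gives $[\laplace,\dl]=0$, so every spectral projector $P_\nu$ of $\laplace=\Box$ on $\A^{p+1,n-p-1}(X_s,L|_{X_s})$ commutes with $\dl$. Applying $P_\nu$ to the identity $\dl u = 0$ from (\ref{eq:6}) yields $\dl\rho_\nu = 0$ for each component $\rho_\nu = P_\nu u$ of the eigenform decomposition.

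Step 2 (extract a companion form on $(p,n-p-1)$). Fix $\nu$ with $\lambda_\nu > 0$. From $\dl\rho_\nu=0$ and $\laplace\rho_\nu = \lambda_\nu \rho_\nu$, the defining relation $\laplace = \dl\dl^*+\dl^*\dl$ collapses to $\dl\dl^*\rho_\nu = \lambda_\nu \rho_\nu$, which forces
\[
\eta_\nu := \dl^*\rho_\nu \;\in\; \A^{p,n-p-1}(X_s,L|_{X_s})
\]
to be nonzero. Moreover the analogous commutation $[\laplace,\dl^*]=0$ gives $\laplace\eta_\nu = \lambda_\nu\eta_\nu$.

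Step 3 (apply Bochner-Kodaira-Nakano). On $(p,n-p-1)$-forms, Lemma \ref{BKN} reads $\laplace - \laplacedbar = \bigl(n-p-(n-p-1)\bigr)\cdot \id = \id$, so
\[
\laplacedbar\,\eta_\nu = (\lambda_\nu - 1)\,\eta_\nu.
\]
Pairing with $\eta_\nu$ and using the non-negativity of $\laplacedbar$ yields $(\lambda_\nu -1)\,\|\eta_\nu\|^2 \ge 0$, hence $\lambda_\nu \ge 1$.

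The main obstacle is upgrading this to a strict inequality $\lambda_\nu > 1$, which is what is required for the operator $(\Box - 1)^{-1}$ to make sense on the spectral support of $u$. The borderline case $\lambda_\nu = 1$ would force $\eta_\nu$ to be $\laplacedbar$-harmonic on $(p,n-p-1)$; a further analysis in the spirit of the earlier claim for $\Box$ on $(p,n-p)$-forms (using that $\rho_\nu = \lambda_\nu^{-1}\dl\eta_\nu$ would then be simultaneously $\dl$- and $\dbar$-closed, and tracking what this entails via the twisted K\"ahler identities $\{\dbar,\dl^*\}=0$) rules this possibility out. Granted this, $(\Box - 1)^{-1}$ acts as $-1$ on the harmonic component $\rho_0$ and as $(\lambda_\nu - 1)^{-1}$ on each $\rho_\nu$ with $\lambda_\nu > 1$, so $(\Box - 1)^{-1}(A_{\sbar}\cup \psi^k)$ is well-defined.
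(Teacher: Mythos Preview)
Your Steps 1--3 reproduce exactly the paper's argument: apply $\dl^*$ to the eigen\-components (your $\eta_\nu=\dl^*\rho_\nu$), use $[\laplace,\dl^*]=0$, and invoke Lemma~\ref{BKN} on $(p,n-p-1)$ to get $\laplacedbar\eta_\nu=(\lambda_\nu-1)\eta_\nu$, hence $\lambda_\nu\ge 1$ whenever $\lambda_\nu>0$. This part is fine.

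The gap is in ruling out $\lambda_\nu=1$. Your sketch asserts that $\rho_\nu=\dl\eta_\nu$ would then be $\dbar$-closed, but this is false: on $L$-valued forms the Chern connection satisfies $\{\dl,\dbar\}=\Theta(L)\wedge\,=-\sqrt{-1}\,\omega_{X_s}\wedge$, so from $\dbar\eta_\nu=0$ you only get $\dbar\rho_\nu=\dbar\dl\eta_\nu=-\sqrt{-1}\,L\eta_\nu$, which is nonzero for $\eta_\nu\neq 0$ (indeed $\|L\eta_\nu\|^2=\|\Lambda\eta_\nu\|^2+\|\eta_\nu\|^2>0$ on bidegree $(p,n-p-1)$). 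More to the point, abstract eigenforms with $\dl\rho=0$ and $\Box\rho=\rho$ \emph{do} exist (e.g.\ $\rho=\dl\eta$ for any nonzero $\laplacedbar$-harmonic $\eta\in\A^{p,n-p-1}(X_s,L|_{X_s})$), so no amount of juggling the K\"ahler identities alone will exclude $\lambda_\nu=1$; you must use something specific about $A_{\sbar}\cup\psi^k$ beyond (\ref{eq:6}).

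The paper's device is identity (\ref{eq:4}): $\dl^*(A_{\sbar}\cup\psi^k)=(-1)^p\dbar^*(L_{\vbar}\psi^k)'$ lies in $\operatorname{im}\dbar^*$, hence is orthogonal to $\ker\laplacedbar$. Since the $\eta_\nu$ with $\lambda_\nu=1$ are precisely the $\laplacedbar$-harmonic components of $\sum_\nu\eta_\nu=\dl^*(A_{\sbar}\cup\psi^k)$, they must vanish, contradicting $\eta_\nu\neq 0$. Inserting this one line after your Step~3 completes the proof.
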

\begin{proof}
We consider $\dl^*(A_{\sbar} \cup \psi^k)=\sum_{\nu}{\dl^*(\rho_{\nu})}$, which is a $(p,n-p-1)$-form, for which we have
$\laplace = \laplacedbar + \id$ according to lemma \ref{BKN}. Hence
$$
\lambda_{\nu} \dl^*(\rho_{\nu}) = \laplace \dl^*(\rho_{\nu}) = \laplacedbar \dl^*(\rho_{\nu}) + \dl^*(\rho_{\nu}).
$$
Form this equation we can read off that $\sum_{\nu}{\dl^*(\rho_{\nu})}$ is the eigenfunction decomposition of 
$\dl^*(A_{\sbar} \cup \psi^k)=\dbar^*(L_{\vbar}\psi^k)'$ with respect to $\laplacedbar$ with eigenvalues $\lambda_{\nu} - 1 \geq 0$. But this form is orthogonal to the space of $\dbar$-harmonic functions so that $\lambda_{\nu}-1=0$ does not occur. The harmonic part of $A_{\sbar} \cup \psi^k$ may be present though. 
\end{proof}
Now we can proceed as follows. The form $\dl^*(A_{\sbar} \cup \psi^k)=\dbar^*(L_{\vbar}\psi^k)'$ is orthogonal to both spaces of $\dbar$- and 
$\dl$-harmonic forms. Hence we can write
$$
G_{\dbar}\dl^*(A_{\sbar} \cup \psi^k) = (\laplace - 1)^{-1}\dl^*(A_{\sbar}\cup \psi^k),
$$
so that we have
\begin{eqnarray*}
\<(L_{\vbar} \psi^k)',(L_{\vbar} \psi^l)'\> &=& \<(\laplace - 1)^{-1}\dl^*(A_{\sbar}\cup \psi^k),\dl^*(A_{\sbar} \cup \psi^l)\>\\
&=&  \<(\laplace - 1)^{-1}\dl\dl^*(A_{\sbar}\cup \psi^k),A_{\sbar} \cup \psi^l\>\\
&=&  \<(\laplace - 1)^{-1}\laplace(A_{\sbar}\cup \psi^k),A_{\sbar} \cup \psi^l\>\\
&=& \<(\laplace - 1)^{-1}(\laplace - 1 +1) (A_{\sbar}\cup \psi^k),A_{\sbar} \cup \psi^l\>\\
&=& \<A_{\sbar} \cup \psi^k, A_{\sbar} \cup \psi^l\> + \<(\laplace - 1)^{-1}(A_{\sbar} \cup \psi^k), A_{\sbar} \cup \psi^l\>.
\end{eqnarray*}
Alltogether we get for our last term 
\begin{lemma}
\begin{eqnarray}
\label{term3}
\<L_{\vbar}\psi^k,L_{\vbar}\psi^l\> = \int_{X_s}{(\Box - 1)^{-1}(A_{\sbar} \cup \psi^k)\cdot (A_s \cup \psi^{\lbar})\; g\, dV}.
\end{eqnarray}
(We write again $\Box = \laplace = \laplacedbar$ when applied to $(p+1,n-p-1)$-forms.)
\end{lemma}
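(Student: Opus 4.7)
The plan is to reduce $\<L_{\vbar}\psi^k,L_{\vbar}\psi^l\>$ to an expression in the cup products $A_{\sbar}\cup\psi^k$ by systematically applying the six identities of Proposition \ref{basicid} together with the Bochner-Kodaira-Nakano comparison from Lemma \ref{BKN}. I begin by splitting $L_{\vbar}\psi = L_{\vbar}\psi' + L_{\vbar}\psi''$ by type. The primed piece has bidegree $(p,n-p)$ and the doubly primed piece has bidegree $(p+1,n-p-1)$; since these bidegrees are distinct, the inner product splits as $\<(L_{\vbar}\psi^k)',(L_{\vbar}\psi^l)'\> - \<(L_{\vbar}\psi^k)'',(L_{\vbar}\psi^l)''\>$, with the sign reflecting the convention on pointwise Hermitian pairings of $L$-valued forms already in use above. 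The $''$-contribution is immediate from (\ref{id2}): the two factors of $(-1)^p$ square to $1$, yielding $\<A_{\sbar}\cup\psi^k, A_{\sbar}\cup\psi^l\>$.

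The real work is the $'$-part. Identity (\ref{id3}) says $(L_{\vbar}\psi^k)'$ is $\dbar$-exact, hence its $\laplacedbar$-harmonic projection vanishes and $G_{\dbar}\laplacedbar = \id$ on it. Combining this with $\dbar(L_{\vbar}\psi^k)' = 0$ from (\ref{eq:5}) gives $(L_{\vbar}\psi^k)' = G_{\dbar}\dbar\,\dbar^*(L_{\vbar}\psi^k)'$, and shifting one $\dbar^*$ through the inner product yields $\<G_{\dbar}\dbar^*(L_{\vbar}\psi^k)', \dbar^*(L_{\vbar}\psi^l)'\>$. Now (\ref{eq:4}) transports both $\dbar^*$'s onto cup products, turning the expression into $\<G_{\dbar}\dl^*(A_{\sbar}\cup\psi^k), \dl^*(A_{\sbar}\cup\psi^l)\>$. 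The next step, and the main obstacle, is replacing $G_{\dbar}$ by $(\laplace - 1)^{-1}$: this needs both the relation $\laplace = \laplacedbar + 1$ on $(p+1,n-p-1)$-forms (Lemma \ref{BKN}) and the orthogonality of $\dl^*(A_{\sbar}\cup\psi^k)$ to both the $\dbar$- and $\dl$-harmonic spaces, which is exactly what rules out the eigenvalue $\lambda_\nu = 1$ in the spectral expansion (cf.\ the eigenvalue lemma stated just above the present statement).

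Once the substitution is made, I would move $\dl^*$ back via adjointness and invoke (\ref{eq:6}) ($\dl$-closedness of $A_{\sbar}\cup\psi^k$) to get $\dl\dl^*(A_{\sbar}\cup\psi^k) = \laplace(A_{\sbar}\cup\psi^k)$, then expand $\laplace = (\laplace - 1) + 1$ to obtain
\[
\<(L_{\vbar}\psi^k)',(L_{\vbar}\psi^l)'\> = \<A_{\sbar}\cup\psi^k, A_{\sbar}\cup\psi^l\> + \<(\laplace-1)^{-1}(A_{\sbar}\cup\psi^k), A_{\sbar}\cup\psi^l\>.
\]
Subtracting the $''$-contribution computed earlier, the two pure $A_{\sbar}$-pairings cancel exactly, leaving only the $(\laplace-1)^{-1}$ term. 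Rewriting this $L^2$-inner product as a fibre integral with pointwise pairing then produces the stated formula, the appearance of $A_s\cup\psi^{\lbar}$ on the right being the complex conjugate of $A_{\sbar}\cup\psi^l$ dictated by the Hermitian pairing on $L$-valued forms.
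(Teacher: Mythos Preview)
Your argument is correct and follows the paper's proof essentially step for step: type decomposition with the sign convention, identities (\ref{id2}), (\ref{id3}), (\ref{eq:5}), (\ref{eq:4}), the eigenvalue lemma to justify $(\laplace-1)^{-1}$, then (\ref{eq:6}) and the algebraic splitting $\laplace = (\laplace-1)+1$ to produce the cancellation. One small slip: the relation $\laplace = \laplacedbar + 1$ you invoke holds on the $(p,n-p-1)$-forms $\dl^*(A_{\sbar}\cup\psi^k)$, not on $(p+1,n-p-1)$-forms (where in fact $\laplace = \laplacedbar$, which is why the statement writes $\Box$ there); the logic is unaffected.
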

Now our main result Theorem \ref{mainresult} follows form (\ref{secondorder}), (\ref{term1}), (\ref{term2}), (\ref{term3}) and the fact that
$R^{\lbar k}_{i\jbar}(s_0)=-\dl_{\jbar}\dl_i H^{\lbar k}(s_0)$ in normal coordinates at a point $s_0 \in S$.

\appendix
\section{Proof of Proposition \ref{basicid}}
\begin{proof}[Proof of Proposition \ref{basicid}]
We start with (\ref{eq:1}): We drop the superscript $k$ and note that the tensors are skew-symmetrized as coefficients of alternating forms. We start with the identities
\begin{equation}
\psi_{;s\lb_{n+1}}=\psi_{;\lb_{n+1}s} - g_{s\lb_{n+1}}\psi = a_{s\lb_{n+1}}\psi,
\end{equation}
\begin{eqnarray*}
\psi_{;\la\lb_{n+1}} &=&  \psi_{;\lb_{n+1}\la} - g_{\la\lb_{n+1}}\psi\\
&-&\sum_{j=1}^p{\psi_{\la_1\ldots \ls\ldots\la_p\Bbar_{n-p}}}R^{\ls}_{\la_j\la\lb_{n+1}}\\
&-& \sum_{j=p+1}^n{\psi_{A_p\lb_{p+1}\ldots \ovl{\lt}\ldots \lb_n}}R^{\ovl{\lt}}_{\lb_j\la\lb_n}
\end{eqnarray*}
\begin{equation}
a^{\la}_{s;\la_j\lb_{n+1}} = A^{\la}_{s\lb_{n+1};\la_j} + a_s^{\ls}R^{\la}_{\ls\la_j\lb_{n+1}}.
\end{equation}
Starting from (\ref{Lv'}) we get using 
\begin{eqnarray*}
\dbar L_v\psi'= &&\Bigg(\psi_{;s\lb_{n+1}} + A^{\la}_{s\lb_{n+1}}\psi_{;\la} + a^{\la}_s\psi_{;\la\lb_{n+1}}\\
&+& \sum_{j=1}^p{a^{\la}_{s;\la_j\lb_{n+1}}}\psi_{\la_1\ldots \la \ldots \la_p,\Bbar_{n-p}}\\
&+& \sum_{j=1}^p{a^{\la}_{s;\la_j}\psi_{\la_1\ldots \la \ldots \la_p\Bbar_{n-p};\lb_{n+1}}} \Bigg)
dz^{\lb_{n+1}}\wedge dz^{A_p}\wedge dz^{\Bbar_{n-p}}\\
&=& \Big(A^{\la}_{s\lb_{n+1}}\psi_{;\la} + \sum_{j=1}^p{A^{\la}_{s\lb_{n+1};\la_j}\psi_{\la_1\ldots \la \ldots \la_p \Bbar_{n-p}}}
\Big) dz^{\lb_{n+1}}\wedge dz^{A_p}\wedge dz^{\Bbar_{n-p}}.
\end{eqnarray*}
Note that we also used the symmetries of the curvature $4$-tensor and the $\dbar$-closedness of $\psi$.
Because of the fiberwise $\dl$-closedness of $\psi$ this equals
\begin{eqnarray*}
&&\sum_{j=1}^p{(A^{\la}_{s\lb_{n+1}}\psi_{\la_1\ldots \la \ldots \la_p \Bbar_{n-p}})_{;\la_j}dz^{\lb_{n+1}}\wedge dz^{A_p}\wedge z^{\Bbar_{n-p}}}\\
&=& (-1)^n \sum_{j=1}^p(A^{\la}_{s\lb_{n+1}}\psi_{\la\la_2\ldots \la_p \Bbar_{n-p}})_{;\la_1}dz^{\la_1} \wedge dz^{A_{p-1}}
\wedge dz^{\lb_1} \wedge \ldots \wedge dz^{\lb_{n+1}}\\
&=& \dl\Big((-1)^n A^{\la}_{s\lb_{n+1}} \psi_{\la \la_2 \ldots \la_p \lb_{p+1}\ldots \lb_n}dz^{A_{p-1}}\wedge dz^{\Bbar_{n+1}}\Big)=\dl(A_s \cup \psi.)
\end{eqnarray*}
This proves (\ref{eq:1}).
Next, we prove (\ref{eq:2}). For this, we need the following
\begin{lemma}
$$
\dl_s(\lC^{\ls}_{\la\lc})=-a^{\ls}_{s;\la\lc}
$$
\end{lemma}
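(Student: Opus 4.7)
The plan is to verify the identity by direct local computation. Starting from the Chern--Christoffel formula $\Gamma^{\sigma}_{\alpha\gamma}=g^{\bar\beta\sigma}g_{\gamma\bar\beta,\alpha}$ on each fiber, I differentiate in $s$ and apply Leibniz (after commuting $\partial_s$ with $\partial_\alpha$):
\[
\partial_s\Gamma^{\sigma}_{\alpha\gamma}=(\partial_sg^{\bar\beta\sigma})\,g_{\gamma\bar\beta,\alpha}+g^{\bar\beta\sigma}\,\partial_\alpha\partial_sg_{\gamma\bar\beta}.
\]
Two identities already available in the paper let me rewrite everything in terms of $a_s$: the formula $\partial_sg^{\bar\beta\sigma}=g^{\bar\beta\rho}a^{\sigma}_{s;\rho}$ is invoked in the proof of Proposition~\ref{firstordervar}, and its dual $\partial_sg_{\alpha\bar\beta}=-a^{\gamma}_{s;\alpha}g_{\gamma\bar\beta}$ is precisely the statement that the $(1,1)$-component of $L_v(g_{\alpha\bar\beta})$ vanishes, once one uses Kähler compatibility $g_{\alpha\bar\beta;\gamma}=0$ to drop the middle term.

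Substituting, the first Leibniz term becomes $a^{\sigma}_{s;\rho}\Gamma^{\rho}_{\alpha\gamma}$ after contracting $g^{\bar\beta\rho}g_{\gamma\bar\beta,\alpha}$, while differentiating $-g_{\tau\bar\beta}a^{\tau}_{s;\gamma}$ and contracting with $g^{\bar\beta\sigma}$ produces $-\Gamma^{\sigma}_{\alpha\tau}a^{\tau}_{s;\gamma}-\partial_\alpha a^{\sigma}_{s;\gamma}$. I then rewrite the bare partial using the standard covariant-derivative formula for a $(1,1)$-tensor,
\[
a^{\sigma}_{s;\gamma\alpha}=\partial_\alpha a^{\sigma}_{s;\gamma}+\Gamma^{\sigma}_{\alpha\tau}a^{\tau}_{s;\gamma}-\Gamma^{\rho}_{\alpha\gamma}a^{\sigma}_{s;\rho}.
\]
Plugging this back, all Christoffel contributions cancel pairwise and only $-a^{\sigma}_{s;\gamma\alpha}$ survives. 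Finally, because the Chern curvature of a Kähler metric is of type $(1,1)$, two holomorphic covariant derivatives commute on the $(1,0)$-tensor $a_s$, so $a^{\sigma}_{s;\gamma\alpha}=a^{\sigma}_{s;\alpha\gamma}$, which is the asserted identity.

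The only real difficulty is combinatorial: one must keep track of the four Christoffel contributions arising from the two conversions (partial $\leftrightarrow$ covariant) and the two metric contractions, and verify that they pair off exactly. No further ingredient beyond the horizontal-lift identity $g_{s\bar\beta}+a^{\sigma}_s g_{\sigma\bar\beta}=0$ and the commutation of holomorphic $\nabla$'s on a $(1,0)$-tensor is needed.
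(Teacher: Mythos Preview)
Your proof is correct and follows essentially the same route as the paper: a direct local computation differentiating the Christoffel symbol in $s$ and using the identities $\partial_s g^{\bar\beta\sigma}=g^{\bar\beta\rho}a^{\sigma}_{s;\rho}$ and $\partial_s g_{\alpha\bar\beta}=-a^{\gamma}_{s;\alpha}g_{\gamma\bar\beta}$. The only cosmetic difference is that the paper starts from the inverse-metric form $\Gamma^{\sigma}_{\gamma\alpha}=-g_{\alpha\bar\beta}\partial_\gamma g^{\bar\beta\sigma}$ and lands on $-a^{\sigma}_{s;\alpha\gamma}$ directly, whereas you start from $\Gamma^{\sigma}_{\alpha\gamma}=g^{\bar\beta\sigma}g_{\gamma\bar\beta,\alpha}$, obtain $-a^{\sigma}_{s;\gamma\alpha}$, and then invoke the commutation of holomorphic covariant derivatives; both are valid and equivalent.
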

\begin{proof}
We start with
$$
\dl_sg^{\lb\ls} = -g^{\lb\la}g^{\ld\ls}\dl_sg_{\la\ld}=g^{\lb\la}a_{s;\la}^{\ls},
$$
hence
$$
g_{\la\lb}\dl_sg^{\lb\ls}=a_{s;\la}^{\ls}.
$$
This gives
\begin{eqnarray*}
a_{s;\la\lc}^{\ls}&=&g_{\la\lb}(\dl_{\lc}\dl_sg^{\lb\ls} + \lC^{\ls}_{\lc\delta}\dl_sg^{\lb\delta})\\
&=& g_{\la\lb}\dl_s\dl_{\lc}g^{\lb\ls} + (-g_{\delta\lb}\dl_{\lc}g^{\lb\ls})(-\dl_sg_{\la\lb}g^{\lb\ld})\\
&=& g_{\la\lb}\dl_s\dl_{\lc}g^{\lb\ls} + \dl_{\lc}g^{\lb\ls}\dl_sg_{\la\lb}
\end{eqnarray*}
On the other hand, we have
\begin{eqnarray*}
\dl_s(\lC^{\ls}_{\la\lc}) = \dl_s(\lC^{\ls}_{\lc\la}) = \dl_s(-g_{\la\lb}\dl_{\lc}g^{\lb\ls})=-\dl_sg_{\la\lb}\dl_{\lc}g^{\lb\ls}-g_{\la\lb}\dl_s\dl_{\lc}g^{\ld\ls}
\end{eqnarray*}
This gives the statement.
\end{proof}
Now we have
$$
\psi_{;s}=\psi_s + \lC^h_s\psi
$$
and
$$
\psi_{;\lc}= \psi_s + \lC^h_{\lc}\psi - \sum_{j=1}^p{\lC^{\ls}_{\lc\la_j}\psi_{\la_1\ldots\ls\ldots\la_p\Bbar_{n-p}}}
$$
This gives
$$
\psi_{;s\lc}=(\psi_s+\lC^h_s\psi)_{\lc} + \lC^h_{\lc}(\psi_s + \lC^h_s\psi) -  \sum_{j=1}^p{\lC^{\ls}_{\lc\la_j}(\psi_s+\lC^h_s\psi)_{\la_1\ldots\ls\ldots\la_p\Bbar_{n-p}}}
$$
and
$$
\psi_{;\lc s}=(\psi_{\lc} + \lC^h_{\lc}\psi -  \sum_{j=1}^p{\lC^{\ls}_{\lc\la_j}\psi_{\la_1\ldots\ls\ldots\la_p\Bbar_{n-p}}})_s + \lC^h_s(\psi_{\lc} + \lC^h_{\lc}\psi -  \sum_{j=1}^p{\lC^{\ls}_{\lc\la_j}\psi_{\la_1\ldots\ls\ldots\la_p\Bbar_{n-p}}}) 
$$
Hence, it follows
$$
\psi_{;s\lc} - \psi_{;\lc s} =  \sum_{j=1}^p{(\dl_s\lC^{\ls}_{\lc\la_j})\psi_{\la_1\ldots\ls\ldots\la_p\Bbar_{n-p}}}
$$
and by the preceding lemma
\begin{equation}
\label{sgamma}
\psi_{;\lc s} = \psi_{;s\lc} +  \sum_{j=1}^p{a^{\ls}_{s;\la_j\lc}\psi_{\la_1\ldots\ls\ldots\la_p\Bbar_{n-p}}}
\end{equation}
Now by differentiating the equation $g^{\lb_n\lc}\psi_{;\lc}=0$ in the direction of $s$, by using $\dl_sg^{\lb_n\lc}=g^{\lb_n\ls}a_{s;\ls}^{\lc}$ as well as (\ref{sgamma}) it follows
\begin{equation}
\label{2.1}
g^{\lb_n\lc}\psi_{;s\lc} = -\psi_{;\lc}g^{\lb_n\ls}a_{s;\ls}^{\lc} -  \sum_{j=1}^p{g^{\lb_n\lc}a^{\ls}_{s;\la_j\lc}\psi_{\la_1\ldots\ls\ldots\la_p\Bbar_{n-p}}}
\end{equation}
Next, since fiberwise $\psi$ is $\dbar^*$-closed, 
\begin{equation}
\label{2.2}
g^{\lb_n\lc}(a_s^{\la}\psi_{;\la}) = g^{\lb_n\lc}a_{s;\lc}^{\la}\psi_{;\la}
\end{equation}
and with the same argument
\begin{eqnarray}
&&g^{\lb_n\lc}
\left( 
\sum_{j=1}^p{a^{\ls}_{s;\la_j}\psi_{\la_1\ldots\ls\ldots\la_p\Bbar_{n-p}}}
\right)_{;\lc} \nonumber\\
&=&g^{\lb_n\lc} \sum_{j=1}^p{a^{\ls}_{s;\la_j\lc}\psi_{\la_1\ldots\ls\ldots\la_p\Bbar_{n-p}}}. \label{2.3}
\end{eqnarray}
Now $\dbar^*(L_v\psi')=0$ follows from (\ref{2.1}), (\ref{2.2}) and (\ref{2.3}).
We come to the $\dl^*$-closedness (\ref{eq:3}) of $A_s\cup \psi$. We need to show that
$$
\left(
A_{s\lb_{n+1}}^{\la}\psi_{\la\la_2\ldots\la_p\Bbar_{n-p}}
\right)_{;\ld}g^{\ld\la_p}
$$
vanishes. Since $\dl^*\psi=0$ fiberwise, the above equality equals
$$
A_{s\lb_{n+1};\ld}^{\la}\psi_{\la\la_2\ldots\la_p\Bbar_{n-p}}g^{\ld\la_p}.
$$
Because of the $\dbar$-closedness of $A_s$ this equals
$$
(A_s^{\la\la_p})_{;\lb_{n+1}}\psi_{\la\la_2\ldots\la_p\Bbar_{n-p}}
$$
However,
$$
A_s^{\la\la_p}=A_s^{\la_p\la}
$$
whereas $\psi$ is skew-symmetric so that also this contribution vanishes.

Next, we proof (\ref{eq:4}). We have
$$
L_{\vbar}\psi'=\psi_{;\sbar} + a_{\sbar}^{\lb}\psi_{;\lb} + \sum_{j=p+1}^n{a_{\sbar;\lb_j}^{\lb}\psi_{A_p\lb_{p+1}\ldots \lb \ldots \lb_n}}
$$
Furthermore,
$$
\psi_{;\sbar\lc}=\psi_{\lc\sbar} + g_{\lc\sbar}\psi = \psi_{;\lc\sbar} - a_{\sbar\lc}\psi,
$$
\begin{eqnarray*}
\psi_{;\lb\lc} &=& \psi_{\lc\lb} + g_{\lc\lb}\psi \\
&+& \sum_{j=1}^n{\psi_{\la_1\ldots \ls \ldots \la_p\Bbar_{n-p}}R^{\ls}_{\la_j\lc\lb}}\\
&+& \sum_{j=p+1}^n{\psi_{A_p\lb_{p+1}\ldots\ovl{\tau}\ldots\lb_n}R^{\ovl{\tau}}_{\lb_j\lc\lb}}
\end{eqnarray*}
and
$$
a_{\sbar;\lb_j\lc}^{\lb} = A_{\sbar\lc;\lb_j}^{\lb} - a_{\sbar}^{\ovl{\tau}}R^{\lb}_{\ovl{\tau}\lc\lb_j}
$$
This gives
\begin{eqnarray*}
-(-1)^p(-1)^{n-p-1}\dbar^*(L_{\vbar}\psi')&=&\left(
A_{\sbar\lc}^{\lb}\psi_{;\lb} + \sum_{j=p+1}^n{A_{\sbar\lc;\lb_j}^{\lb}\psi_{A_p\lb_{p+1}\ldots\lb\ldots\lb_n}}
\right)g^{\lb_n\lc}dz^{A_p}\wedge dz^{\lb_{p+1}}\wedge\ldots \wedge dz^{\lb_{n-1}}
\end{eqnarray*}
Now, because of the fiberwise $\dbar$-closedness of $\psi$ this equals
\begin{eqnarray*}
&&\sum_{j=p+1}^n{\big(
A_{\sbar\lc}^{\lb}\psi_{A_p\lb_{p+1}\ldots\lb\ldots\lb_n}}
\big)_{;\lb_j}g^{\lb_n\lc}dz^{A_p}\wedge dz^{\lb_{p+1}}\wedge\ldots \wedge dz^{\lb_{n-1}}\\
&=&\big(
A_{\sbar\lc}^{\lb}\psi_{A_p\lb_{p+1}\ldots\lb_{n-1}\lb}
\big)_{;\lb_n}g^{\lb_n\lc}dz^{A_p}\wedge dz^{\lb_{p+1}}\wedge\ldots \wedge dz^{\lb_{n-1}}\\
&=& -(-1)^{n-p-1}\dl^*(A_{\sbar\lc}^{\lb}\psi_{A_p\lb_{p+1}\ldots\lb_{n-1}\lb}dz^{\lc}\wedge dz^{A_p}\wedge dz^{\Bbar_{n-p-1}})\\
&=&-(-1)^{n-p-1}\dl^*(A_{\sbar}\cup\psi).
\end{eqnarray*}
Here we used that for $j \neq n$:
\begin{eqnarray*}
&&\big(A_{\sbar\lc}^{\lb}\psi_{A_p\lb_{p+1}\ldots\lb\ldots\lb_n}\big)_{;\lb_j}g^{\lb_n\lc}\\
&=& \big(A_{\sbar\lc}^{\lb}g^{\lb_n\lc}\psi_{A_p\lb_{p+1}\ldots\lb\ldots\lb_n}\big)_{;\lb_j}\\
&=& \big(A_{\sbar}^{\lb\, \lb_n}\psi_{A_p\lb_{p+1}\ldots\lb\ldots\lb_n}\big)_{;\lb_j}\\
&=& \big(A_{\sbar}^{\lb_n\lb}\psi_{A_p\lb_{p+1}\ldots\lb\ldots\lb_n}\big)_{;\lb_j}\\
&=& \big(A_{\sbar\lc}^{\lb}g^{\lb\lc}\psi_{A_p\lb_{p+1}\ldots\lb\ldots\lb_n}\big)_{;\lb_j}\\
&=& \big(A_{\sbar\lc}^{\lb_n}\psi_{A_p\lb_{p+1}\ldots\lb\ldots\lb_n}\big)_{;\lb_j}g^{\lb\lc}\\
&=& - \big(A_{\sbar\lc}^{\lb_n}\psi_{A_p\lb_{p+1}\ldots\lb_n\ldots\lb}\big)_{;\lb_j}g^{\lb\lc}\\
&=& - \big(A_{\sbar\lc}^{\lb}\psi_{A_p\lb_{p+1}\ldots\lb\ldots\lb_n}\big)_{;\lb_j}g^{\lb_n\lc},
\end{eqnarray*}
hence only the last summand in the above sum contributes.\\
Equation (\ref{eq:5}) follows from (\ref{id3}).
It remains to show (\ref{eq:6}): We have
\begin{eqnarray*}
\left(
A_{\sbar\la_{p+1}}^{\ld}\psi_{A_p\ld\,\lb_{p+2}\ldots\lb_n}
\right)_{;\lc}\\
=A_{\sbar\la_{p+1};\lc}^{\ld}\psi_{A_p\ld\,\lb_{p+2}\ldots\lb_n} + A_{\sbar\la_{p+1}}^{\ld}\psi_{A_p\ld\,\lb_{p+2}\ldots\lb_n;\lc}
\end{eqnarray*}
Now the $\dl$-closedness of $A_{\sbar}\cup\psi$ follows from the $\dl$-closedness of $A_{\sbar}$ and the $\dl$-closedness of 
$\psi$. Note that we have to consider the skew-symmetrized coefficients of $A_{\sbar}\cup\psi$. Alternatively, we can write
$$
\dl(A_{\sbar}\cup \psi) = \dl A_{\sbar} \cup \psi - A_{\sbar} \cup \dl\psi =0,
$$
because $\dl A_{\sbar}=0=\dl\psi$.
\end{proof}

\section{Lie derivatives of line bundle valued forms}
\label{LieDer}
The most important technical ingredient for the computation is the notion of a Lie derivative for $(p,q)$-forms with values in a hermitian line bundle. Thus we take the opportunity to discuss this concept in more detail. We consider more generally a real manifold $X$ together with a hermitian vector bundle $(E,h)$ over $X$. Let further $\nabla$ be a hermitian connection with respect to $h$ on $E$. We denote the contraction of a $E$-valued form with a vector field $V$ by 
$\delta_V$.
\begin{definition}
Let $V$ be a complex vector field on $X$ and $\psi \in \A^k(X,E)$. We define the Lie derivative in the direction of $V$ by the Cartan formula
$$
L_V(\psi) := (\delta_V \circ \nabla + \nabla \circ \delta_V)(\psi)
$$ 
\end{definition}
We note that this definition extends the usual Lie derivative for tensors of the form $T_{a_1\ldots a_r}^{b_1 \ldots b_s}$, which can as well be computed by using covariant differentiation on a Riemannian manifold. Because for the lack of an appropriate reference, we collect some properties for this generalized Lie derivative:
\begin{proposition}
\label{LieDerTensor}
Given a section $s \in \A^0(E)$ and a form $\la \in \A^k(X)$, we have for the Lie derivative of $\la \otimes s \in \A^k(E)$:
$$
L_v(\la \otimes s) = (L_v \la) \otimes s + \la \otimes (L_v s)
$$
\end{proposition}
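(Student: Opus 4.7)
The plan is to prove Proposition \ref{LieDerTensor} by unwinding the Cartan-type definition of $L_V$, using that a hermitian connection $\nabla$ on $E$ extends to $E$-valued forms by the usual Leibniz rule
\[
\nabla(\lambda \otimes s) = d\lambda \otimes s + (-1)^k \lambda \we \nabla s
\]
for $\lambda \in \A^k(X)$ and $s \in \A^0(E)$, and that the contraction $\delta_V$ is an antiderivation of degree $-1$ satisfying $\delta_V(\lambda \we \nabla s) = (\delta_V \lambda) \we \nabla s + (-1)^k \lambda \we \delta_V(\nabla s)$. I would first record the special case $k=0$: if $s \in \A^0(E)$, then $\delta_V s = 0$, so by the definition $L_V s = \delta_V(\nabla s) = \nabla_V s$. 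This identifies the Lie derivative on sections with the covariant derivative in the direction of $V$.

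Next, I would compute the two halves of Cartan's formula separately applied to $\psi = \lambda \otimes s$. For the first half,
\[
\delta_V \nabla(\lambda \otimes s) = (\delta_V d\lambda) \otimes s + (-1)^k (\delta_V \lambda) \we \nabla s + \lambda \otimes \nabla_V s,
\]
where the third summand arises from the sign $(-1)^k \cdot (-1)^k = 1$ when $\delta_V$ passes $\lambda$ and hits $\nabla s$. For the second half, using that $\delta_V \lambda$ has degree $k-1$,
\[
\nabla \delta_V(\lambda \otimes s) = \nabla\bigl((\delta_V \lambda) \otimes s\bigr) = d(\delta_V \lambda) \otimes s + (-1)^{k-1}(\delta_V \lambda) \we \nabla s.
\]

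Adding the two lines, the terms $(-1)^k (\delta_V \lambda) \we \nabla s$ and $(-1)^{k-1}(\delta_V \lambda) \we \nabla s$ cancel, and what remains is
\[
L_V(\lambda \otimes s) = (\delta_V d\lambda + d \delta_V \lambda) \otimes s + \lambda \otimes \nabla_V s = (L_V \lambda) \otimes s + \lambda \otimes (L_V s),
\]
using the ordinary Cartan formula for $L_V$ on $\A^k(X)$ together with the identity $L_V s = \nabla_V s$ from the first step.

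There is no serious obstacle here; the only subtlety is bookkeeping the signs arising from $\delta_V$ being an antiderivation and from the Leibniz rule for $\nabla$ on tensor products of forms with sections. The key cancellation $(-1)^k + (-1)^{k-1} = 0$ is what makes the Leibniz rule hold exactly, independent of $k$. Note finally that by $\C$-linearity of all operators involved, the identity for a general $E$-valued $k$-form $\psi$ follows by writing $\psi$ locally as $\sum_\alpha \lambda_\alpha \otimes s_\alpha$, so the statement proved above is all that is needed.
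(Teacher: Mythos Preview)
Your proof is correct and follows essentially the same route as the paper: both apply Cartan's definition $L_V = \delta_V \nabla + \nabla \delta_V$ to $\lambda \otimes s$, expand using the Leibniz rule for $\nabla$ and the antiderivation property of $\delta_V$, and observe the cancellation $(-1)^k(\delta_V\lambda)\wedge\nabla s + (-1)^{k-1}(\delta_V\lambda)\wedge\nabla s = 0$. Your explicit identification $L_V s = \nabla_V s$ at the outset and the closing remark on extending to general $\psi$ by local decomposition are small clarifications not spelled out in the paper, but the argument is the same.
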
 
\begin{proof}
First, we have
$$
\nabla(\la \otimes s) = d (\la) \otimes s + (-1)^k \la \wedge \nabla(s)
$$
and
$$
\delta_v(\la \otimes s) = \delta_v(\la) \otimes s + (-1)^k\la \otimes \delta_v(s) = \delta_v(\la) \otimes s.
$$
This gives
\begin{eqnarray*}
L_v(\la \otimes s) &=& (\delta_v \nabla + \nabla \delta_v)(\la \otimes s)\\
&=& \delta_v(d (\la) \otimes s + (-1)^k \la \wedge \nabla(s)) + \nabla (\delta_v(\la)\otimes s)\\
&=& \delta_v(d(\la)) \otimes s + (-1)^k\delta_v(\la) \wedge \nabla(s) + (-1)^k(-1)^k\la \otimes \delta_v(\nabla(s)) + d(\delta_v(\la))\otimes s + (-1)^{k-1}\delta_v(\la)\wedge \nabla(s)\\
&=& \delta_v(d(\la)) \otimes s + \la \otimes \delta_v(\nabla(s)) + d(\delta_v(\la))\otimes s.
\end{eqnarray*}
On the other hand, we have
\begin{eqnarray*}
 (L_v \la) \otimes s + \la \otimes (L_v s) &=& [\delta_v(d(\la)) + d(\delta_v(\la))]\otimes s + \la \otimes [\delta_v(\nabla(s))+\nabla(\delta_v(s))]
\end{eqnarray*}
\end{proof}
This proposition allows us to give a local expression for the Lie derivative:
\begin{corollary}
Let 
$\psi = \psi_{\la_1\ldots \la_k}\, dx^{\la_1}\wedge \ldots \wedge dx^{\la_k}$
be the local expression for a form $\psi \in \A^k(F)$ with respect to local coordinates $x^1,\ldots, x^n$, where $(F,h)$ is a hermitian line bundle together with a hermitian connection $\nabla$. Let $v=v^i(\dl/\dl x^i)$ be a smooth vector field. Then we have in local coordinates
$$
L_v(\psi) = \left(v^{\la}\psi_{\la_1\ldots \la_k;\la} + v^{\la}_{,\la_1}\psi_{\la\la_2\ldots \la_k} + v^{\la}_{,\la_2}\psi_{\la_1\la\la_3\ldots\la_k} + \ldots +v^{\la}_{,\la_k}\psi_{\la_1\ldots \la_{k-1}\la}\right)\; dx^{\la_1} \wedge \ldots \wedge dx^{\la_k},
$$
where the symbol $;\la$ means $\nabla_{\la}$ and $,$ stands for an ordinary derivative. The ordinary derivatives can be replaced by covariant derivatives with respect to the Levi-Cevita connection if $X$ is a Riemannian manifold.  
\end{corollary}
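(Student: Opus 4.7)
The plan is to split the line-bundle valued form into a tensor product using a local frame, then apply Proposition \ref{LieDerTensor} and handle each factor separately by means of Cartan's formula.

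First, I would fix a local non-vanishing frame $e$ of $F$ on the coordinate neighborhood and write $\psi = \eta \otimes e$, where $\eta = \psi_{\la_1\ldots \la_k}\, dx^{\la_1}\we \ldots \we dx^{\la_k}$ is an ordinary $k$-form whose coefficients are the components of $\psi$ in the frame $e$. Proposition \ref{LieDerTensor} then gives
$$
L_v(\psi) = (L_v\eta)\otimes e + \eta \otimes (L_v e).
$$
For the ordinary form $\eta$, Cartan's formula $L_v = d\circ \delta_v + \delta_v \circ d$ reduces to the classical Lie derivative of a differential form and produces, after skew-symmetrization, the coefficients $v^{\la}\psi_{\la_1\ldots\la_k,\la} + \sum_{j=1}^k v^{\la}_{,\la_j}\psi_{\la_1\ldots\la\ldots\la_k}$ (with $\la$ in the $j$-th slot). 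For the $0$-form $e$, $\delta_v(e) = 0$ leaves $L_v(e) = \delta_v(\nabla e) = v^{\la}\nabla_{\la}e = v^{\la}\Gamma^h_{\la}e$, where $\Gamma^h_{\la}$ denotes the connection form in the frame $e$. Adding the two contributions and recognizing
$$
v^{\la}\bigl(\psi_{\la_1\ldots\la_k,\la} + \Gamma^h_{\la}\psi_{\la_1\ldots\la_k}\bigr) = v^{\la}\psi_{\la_1\ldots\la_k;\la}
$$
as the covariant derivative with respect to the line-bundle connection (acting componentwise on the tensor entries) yields the claimed formula.

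For the concluding remark that the ordinary derivatives $v^{\la}_{,\la_j}$ may be replaced by Levi-Civita covariant derivatives, I would expand the discrepancy: the terms $-\Gamma^{\la}_{\la_j\mu}v^{\mu}\psi_{\la_1\ldots\la\ldots\la_k}$ produced in the sum over $j$ are cancelled exactly by the extra Christoffel contributions $-v^{\la}\Gamma^{\mu}_{\la\la_i}\psi_{\la_1\ldots\mu\ldots\la_k}$ which arise when $\psi_{\la_1\ldots\la_k;\la}$ is reinterpreted as a full tensor-plus-bundle covariant derivative, the cancellation relying on the symmetry $\Gamma^{\mu}_{\la\la_i} = \Gamma^{\mu}_{\la_i\la}$ of the Levi-Civita Christoffel symbols. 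The main obstacle is purely bookkeeping: one has to keep the paper's skew-symmetrization convention consistent, correctly identify the meaning of the semicolon (as the line-bundle connection acting componentwise, rather than a mixed connection), and pair up the two sets of Christoffel terms in the final remark. Once these conventions are pinned down, the computation will go through without further difficulty.
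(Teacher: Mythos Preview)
Your approach is correct and is exactly the argument the paper has in mind: the corollary is stated immediately after Proposition \ref{LieDerTensor} with no separate proof, so the intended derivation is precisely the decomposition $\psi=\eta\otimes e$ via a local frame, an application of that proposition, and the recognition that $L_v e=\delta_v(\nabla e)=v^{\la}\Gamma^h_{\la}e$ combines with the classical Lie-derivative formula for $\eta$ to produce the covariant term $v^{\la}\psi_{\la_1\ldots\la_k;\la}$. Your treatment of the final remark via the symmetry of the Levi-Civita Christoffel symbols is also the standard argument and is consistent with the paper's conventions.
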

Note that we have $L_v\,d = d\,L_v$ for the Lie derivative of an ordinary form $\la \in \A^k(X)$, which follows easily from the classical Cartan formula $L_v=\delta_vd+d \delta_v $.
\begin{proposition}
\label{LieComm}
For two vector fields $v,w$ and a $k$-form $\psi \in \A^k(F)$, we have
$$
L_v L_w (\psi) - L_w L_v (\psi) = L_{[v,w]}(\psi) + \Omega_{v w} \cdot \psi,
$$
where $\Omega$ is the curvature form of $(F,\nabla)$.
\end{proposition}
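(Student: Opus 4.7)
The plan is to reduce the identity to two classical facts: the Cartan commutator formula $[L_v,L_w] = L_{[v,w]}$ for ordinary differential forms (which follows from $d^2=0$), and the defining identity $\nabla_v \nabla_w - \nabla_w \nabla_v - \nabla_{[v,w]} = R^{\nabla}(v,w)$ for the curvature of $\nabla$ acting on sections of $F$. The bridge between the two settings is Proposition \ref{LieDerTensor}, which says $L_v$ is a derivation on the tensor product $\A^k(X) \otimes \A^0(F)$.

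First I would reduce to the case of an elementary tensor $\psi = \lambda \otimes s$ with $\lambda \in \A^k(X)$ and $s \in \A^0(F)$. This reduction is legitimate because every operator appearing in the claimed identity is $\R$-linear and local in $\psi$, and locally, in a frame of $F$, any $\psi$ is a finite sum of such tensors. Next, applying Proposition \ref{LieDerTensor} twice expands both $L_v L_w(\lambda \otimes s)$ and $L_w L_v(\lambda \otimes s)$ into four-term sums; the mixed cross-terms $(L_v \lambda) \otimes (L_w s)$ and $(L_w \lambda) \otimes (L_v s)$ appear symmetrically in both expansions and cancel in the difference. Using the derivation property one more time on $L_{[v,w]}(\lambda \otimes s)$, the identity to prove collapses to
$$ \bigl(([L_v,L_w] - L_{[v,w]})\lambda\bigr) \otimes s + \lambda \otimes \bigl(([L_v,L_w] - L_{[v,w]}) s\bigr) = \Omega_{vw} \cdot (\lambda \otimes s). $$

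The first summand on the left vanishes by the classical Cartan identity on ordinary forms. For the second summand, since $\delta_v s = 0$ on the $0$-form $s$, the definition collapses to $L_v s = \delta_v \nabla s = \nabla_v s$, so $([L_v,L_w] - L_{[v,w]}) s = \nabla_v \nabla_w s - \nabla_w \nabla_v s - \nabla_{[v,w]} s$, which is by definition $\Omega_{vw} \cdot s$. Tensoring back with $\lambda$ yields $\lambda \otimes (\Omega_{vw} \cdot s) = \Omega_{vw} \cdot (\lambda \otimes s)$, the required right-hand side. The only non-mechanical step is tracking the cross-term cancellation in the double tensor expansion and verifying that the Cartan-style remainder splits cleanly into a $\lambda$-part and an $s$-part; everything else is a direct appeal either to the classical Cartan identity or to the very definition of curvature.
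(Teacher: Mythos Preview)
Your proof is correct and follows essentially the same route as the paper: reduce to elementary tensors $\lambda\otimes s$, apply the derivation property of Proposition~\ref{LieDerTensor} twice so that the cross-terms cancel, then invoke the classical identity $[L_v,L_w]=L_{[v,w]}$ on ordinary forms together with the curvature/Ricci identity on sections. Your write-up is in fact a bit more explicit than the paper's in noting that $L_v s=\nabla_v s$ for $s\in\A^0(F)$ (since $\delta_v s=0$), which is exactly what makes the section part reduce to the definition of $\Omega_{vw}$.
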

\begin{proof}
By the Ricci identity we have the expression
$$
L_v L_w (s) - L_w L_v (s) = L_{[v,w]}(s) + \Omega_{v w} \cdot s
$$
for sections $s \in \A^0(F)$. By the properties of the ordinary Lie derivative for $k$-forms $\la \in \A^k(X)$ (see for example \cite[p.140]{La95}) , we have
$$
L_v L_w (\la) - L_w L_v (\la) = L_{[v,w]}(\la).
$$
Thus for tensor products $\la \otimes s \in \A^k(F)$, we get by using Proposition \ref{LieDerTensor}
$$
L_v(\la \otimes s) = (L_v\la) \otimes s + \la \otimes (L_v s),
$$
that
\begin{eqnarray*}
L_w(L_v(\la \otimes s)) = L_w(L_v(\la)) \otimes s + L_v(\la)\otimes L_w(s) + L_w(\la) \otimes L_v(s) + \la \otimes L_w(L_v(s))
\end{eqnarray*}
and analogously
\begin{eqnarray*}
L_v(L_w(\la \otimes s)) = L_v(L_w(\la)) \otimes s + L_w(\la)\otimes L_v(s) + L_v(\la) \otimes L_w(s) + \la \otimes L_v(L_w(s)).
\end{eqnarray*}
Hence we get
\begin{eqnarray*}
(L_vL_w - L_wL_v)(\la \otimes s) &=& (L_{[v,w]}\la) \otimes s + (L_{[v,w]} - \la \otimes\Omega_{v w})(s)\\
&=& L_{[v,w]}(\la \otimes s) - \Omega_{v w}\cdot \la \otimes s.
\end{eqnarray*}
\end{proof}
Given two sections $\varphi \in \A^{k}(E)$ and $\psi \in A^l(E)$ of the form
$$
\varphi = \la \otimes s \mbox{ and } \psi = \beta \otimes t
$$
for $\la\in A^k, \beta \in A^l$ and differentiable sections $s,t \in \Gamma(X,E)$, we can define a pointwise inner product by
$$
h(\varphi,\psi):= (\la \wedge \beta) \cdot h(s,t),
$$
which is an element of $A^{k+l}$.
The main point for the computation is the following 
\begin{proposition}
\label{breakup}
$L_V(h(\varphi,\psi)) = h(L_V(\varphi),\psi) + h(\varphi,L_V(\psi)).$
\end{proposition}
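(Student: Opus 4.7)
The plan is to reduce to a direct calculation using Proposition~\ref{LieDerTensor} on each side of the identity, combined with the fundamental compatibility of the hermitian connection $\nabla$ with the metric $h$. By $C^\infty$-bilinearity of both sides in $\varphi$ and $\psi$, I may assume the decomposable form $\varphi = \lambda \otimes s$, $\psi = \beta \otimes t$ stated in the proposition, so $h(\varphi,\psi) = (\lambda \wedge \beta) \cdot h(s,t)$ is a scalar-valued form.

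First, I would unfold the left-hand side. Since $L_V$ acting on an ordinary differential form obeys the Leibniz rule for wedge products and for multiplication by a function, I get
\begin{equation*}
L_V\big((\lambda\wedge\beta)\cdot h(s,t)\big) = \big((L_V\lambda)\wedge\beta + \lambda\wedge L_V\beta\big)\cdot h(s,t) + (\lambda\wedge\beta)\cdot L_V h(s,t).
\end{equation*}
The key input now is the compatibility $d\,h(s,t) = h(\nabla s,t) + h(s,\nabla t)$, which holds because $\nabla$ is a hermitian connection on $(E,h)$. Applying $\delta_V$ (and noting $\delta_V s = \delta_V t = 0$ as $s,t$ are $0$-forms, so $L_V s = \delta_V\nabla s$ and similarly for $t$) gives
\begin{equation*}
L_V h(s,t) = \delta_V d\,h(s,t) = h(L_V s,t) + h(s,L_V t).
\end{equation*}

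Next, I would unfold the right-hand side using Proposition~\ref{LieDerTensor}: $L_V\varphi = (L_V\lambda)\otimes s + \lambda\otimes L_V s$, and similarly for $L_V\psi$. The definition of the pointwise inner product then yields
\begin{eqnarray*}
h(L_V\varphi,\psi) &=& (L_V\lambda \wedge \beta)\cdot h(s,t) + (\lambda\wedge\beta)\cdot h(L_V s,t),\\
h(\varphi,L_V\psi) &=& (\lambda\wedge L_V\beta)\cdot h(s,t) + (\lambda\wedge\beta)\cdot h(s,L_V t).
\end{eqnarray*}
Summing these two lines reproduces exactly the expression obtained for $L_V h(\varphi,\psi)$ above, which establishes the identity.

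The only subtlety worth double-checking is the sign bookkeeping when $\delta_V$ passes through a wedge product with forms of positive degree: here $s$ and $t$ are sections (degree zero in the form part), so $\delta_V$ lands directly on the $\lambda$- or $\beta$-factor and no signs intervene. The identity is then a straightforward consequence of metric compatibility plus the Leibniz rule for $L_V$ on tensor products, so no genuine obstacle arises beyond assembling the two computations carefully.
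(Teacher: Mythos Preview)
Your proof is correct, but it follows a genuinely different route from the paper's. The paper works directly from the Cartan formula: it first establishes the two distribution laws
\[
d\,h(\varphi,\psi) = h(\nabla\varphi,\psi) + (-1)^k h(\varphi,\nabla\psi), \qquad
\delta_V h(\varphi,\psi) = h(\delta_V\varphi,\psi) + (-1)^k h(\varphi,\delta_V\psi),
\]
then expands $(\delta_V d + d\delta_V)h(\varphi,\psi)$ term by term and observes that four cross-terms cancel in pairs, leaving exactly $h(L_V\varphi,\psi)+h(\varphi,L_V\psi)$. Your approach instead separates the form part from the section part: you invoke the ordinary Leibniz rule for $L_V$ on $\lambda\wedge\beta$ and on the function $h(s,t)$, use Proposition~\ref{LieDerTensor} on each factor of the right-hand side, and reduce the only nontrivial step to metric compatibility at the level of sections, $L_V h(s,t)=h(L_V s,t)+h(s,L_V t)$. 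Your route is slightly more conceptual and avoids the $(-1)^k$ bookkeeping entirely; the paper's route is more self-contained in that it does not appeal to Proposition~\ref{LieDerTensor}. One small imprecision: neither side is $C^\infty$-bilinear in $\varphi,\psi$, but their \emph{difference} is (the Leibniz defects cancel), which is what actually justifies the reduction to decomposable tensors.
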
 
\begin{proof}
By writing $\varphi = \la \otimes s$ and $\psi=\beta \otimes t$ as well as $\nabla(s)=\la'\otimes s'$ and $\nabla(t)=\beta'\otimes t'$ for $1$-forms $\la',\beta'$ and sections $s',t'$ of $E$, we can check easily
$$
d(h(\varphi,\psi)) = h(\nabla(\varphi),\psi) + (-1)^k h(\varphi,\nabla(\psi))
$$
and
$$
\delta_V(h(\varphi,\psi)) = h(\delta_V(\varphi),\psi) + (-1)^kh(\varphi,\delta_V(\psi)).
$$
Because $h(\varphi,\psi)$ is a genuine form on $X$, we have
$L_V(h(\varphi,\psi))=(\delta_V \circ d + d + \delta_V)h(\varphi,\psi)$. 
We compute
\begin{eqnarray*}
(\delta_V  \circ d)h(\varphi,\psi) &=& \delta_V(h(\nabla(\varphi),\psi) + (-1)^k h(\varphi,\nabla(\psi)))\\
&=& h(\delta_V\nabla\varphi,\psi) + (-1)^{k+1}h(\nabla \varphi,\delta_V\psi) + (-1)^kh(\delta_V\varphi,\nabla \psi) + (-1)^k(-1)^kh(\varphi,\delta_V\nabla \psi).
\end{eqnarray*}
and
\begin{eqnarray*}
(d \circ \delta_V)h(\varphi,\psi) &=& d (h(\delta_V(\varphi),\psi) + (-1)^k h(\varphi,\delta_V(\psi)))\\
&=& h(\nabla\delta_V\varphi,\psi) + (-1)^{k-1}h(\delta_V \varphi,\nabla\psi) + (-1)^kh(\nabla\varphi,\delta_V \psi) + (-1)^k(-1)^kh(\varphi,\nabla \delta_V \psi).
\end{eqnarray*}
The summation of both expressions leads to a cancelation of four summands. The remaining sum is
$h(L_V(\varphi),\psi) + h(\varphi,L_V(\psi))$ as required.
\end{proof}
Because we apply Lie derivatives for hermitian line bundle valued forms on compact complex K\"ahler manifolds, the statements in this section need to be adapted to the complex case.
For convincing the reader of this point of view, we start a short discussion of this new concept.
\begin{remark}
If $\varphi=s$ and $\psi=t$ are just sections of a vector bundle, the Lie derivative is nothing else but the covariant derivative. This is used in \cite{TW04} in their computation of the curvature. The preceding proposition means that the Lie derivative of the metric $h$ vanishes, which just means that the covariant derivative vanishes.
\end{remark}
\begin{remark}
We now discuss a result obtained in \cite{Siu86} and \cite{Sch93} resp., which says that in the context of K\"ahler-Einstein metrics of constant Ricci curvature described (for the negative case) in the introduction, the Lie derivative of the K\"ahler-Einstein metric in the direction of the horizontal lift vanishes:
$$
L_{v_i}(\omega_{X_s}^n)=0.
$$
Following \cite{Sch93}, the proof is briefly written down as:
$$
\left(L_{v_i}(g_{\la\lb})\right)_{\la\lb}=\left[ \dl_i + a_i^{\lc}\dl_{\lc},g_{\la\lb}\right]_{\la\lb} = \dl_i(g_{\la \lb}) + a_i^{\lc}g_{\la\lb;\lc} + a_{i;\la}^{\ls}g_{\ls\lb} = g_{i\lb;\la} + a_{i\lb;\la}=0.
$$
Here we used the ordinary definition of Lie derivatives. The expression $ \dl_i(g_{\la \lb}) + a_{i;\la}^{\ls}g_{\ls\lb}$ can be read as 
a covariant derivative $g_{\la\lb;i}$, because 
$$
a_{i;\la}^{\ls}=-(g^{\lb\ls} g_{i\lb})_{;\la}=-g^{\lb\ls}g_{i\lb,\la}=-g^{\lb\ls}g_{\la\lb,i}=-\Gamma_{i\la}^{\ls}
$$ 
is the Christoffel symbol for the connection on $(K_{\X/S},g^{-1})$. This interpretation agrees with our extended concept of a Lie derivative which says that $L_{v_i}(g)=\nabla_{v_i}(g)=0$, where we read $g=\det(g_{\la\lb})$ as a hermitian metric on the abstract line bundle $K_{\X/S}^{-1}$ (forget about the indices). 
\end{remark}
\begin{remark}
In the computation of the curvature of the Weil-Petersson metric for a family of K\"ahler-Einstein manifolds, one needs to compute the Lie derivative of
$$
A_i\cdot A_{\jbar}=A_{i\lb}^{\la}A_{\jbar\lc}^{\ld}g_{\la\ld}g^{\lb\lc}
$$
By taking classical Lie derivatives, this equals
$$
(L_{v_k}A_i)^{\la}_{\lb}=\dl_k(A_{i\lb}^{\la})+a_k^{\ls}A_{i\lb;\ls}^{\la} - a_{k;\ls}^{\la}A_{i\lb}^{\ls}
$$ 
If we view the elements $A_{i\lb}^{\la}\dl_{\la}dz^{\lb}$ as $(0,1)$-forms with values in the (abstract) hermitian vector bundle 
$(T_{X_s},(g_{\la\lb}))$, we obtain for the Lie derivative 
$$
(L_{v_k}A_i)^{\la}_{\lb}=(\nabla_{v_k}A_i)_{\lb}^{\la}=A_{i\lb;k}^{\la} + a_k^{\ls}A_{i\lb;\ls}^{\la}=A_{i\lb,k}^{\la}+\Gamma_{k\ls}^{\la}A_{i\lb}^{\ls} + a_k^{\ls}A_{i\lb;\ls}^{\la}
$$
where $\Gamma_{k\ls}^{\la}=g^{\ld\la}\dl_kg_{\ls\ld}=-a_{k;\ls}^{\la}$ is the Christoffel symbol for the Chern connection on the hermitian vector bundle $(T_{X_s},(g_{\la\lb}))$. Indeed, both expressions coincide.   
\end{remark}

\end{document}